
\documentclass[10pt,reqno]{amsart}
\usepackage{enumerate}

     \makeatletter
     \def\section{\@startsection{section}{1}%
     \z@{.7\linespacing\@plus\linespacing}{.5\linespacing}%
     {\bfseries
     \centering
     }}
     \def\@secnumfont{\bfseries}
     \makeatother
\setlength{\textheight}{19.5 cm} \setlength{\textwidth}{12.5 cm}
\newtheorem{theorem}{Theorem}[section]
\newtheorem{lemma}[theorem]{Lemma}
\newtheorem{proposition}[theorem]{Proposition}

\theoremstyle{definition}
\newtheorem{definition}[theorem]{Definition}

\newtheorem{hypothesis}{Hypothesis}
\theoremstyle{remark}
\newtheorem{remark}[theorem]{Remark}
\numberwithin{equation}{section} \setcounter{page}{1}





\newcommand{\Real}{\mathbb{R}}

\newcommand{\R}{\mathbb{R}}



\begin{document}

\title[Strong Kac's chaos in the mean-field BEC]{ Strong Kac's chaos in the mean-field Bose-Einstein Condensation}

\author[S. Albeverio]{\bf Sergio Albeverio}

\author[F.C. De Vecchi]{\bf Francesco C. De Vecchi}
\address{Institute for Applied Mathematics and Hausdorff Center for Mathematics, Rheinische Friedrich-Wilhelms-Universit\"at Bonn.
Endenicher Allee 60, 53115 Bonn, Germany.}
\email{albeverio@iam.uni-bonn.de, francesco.deveccchi@uni-bonn.de}

\author[A. Romano]{\bf Andrea Romano}

\author[S. Ugolini]{\bf Stefania Ugolini}
\address{Dipartimento di Matematica, Universit\`a di Milano. Via
Saldini 50, Milano, Italy.}
\email{andrea.romano4@studenti.unimi.it, stefania.ugolini@unimi.it}

\begin{abstract}
A stochastic approach to the (generic) mean-field limit in Bose-Einstein Condensation is described and the convergence of the ground state energy as well as of its components are established.
 For the one-particle process on the path space
a total variation convergence result is proved. A strong form of Kac's chaos on path-space for the $k$-particles probability measures are derived from the previous energy convergence by purely probabilistic techniques notably using a simple chain-rule of the relative entropy. The Fisher's information chaos of the fixed-time marginal probability density under the generic mean-field scaling limit and the related entropy chaos result are also deduced.
\end{abstract}

\subjclass[2000] {Primary  60J60, 60K35, 81S20, 94A17; Secondary
26D15,81S20, 60G10,60G40}

\keywords{Bose-Einstein Condensation, Mean-field scaling
limit, Stochastic Mechanics, interacting Nelson diffusions, strong Kac's chaos,
Fisher's and entropy chaos, convergence of probability measures on
path space.}

\maketitle

\section{Introduction}

We consider the problem of justifying the general mean-field approach to Bose-Einstein Condensation in the ground state framework, starting from the $N$ body Hamiltonian for $N$ Bose particles  and by  performing a suitable limit of infinitely many particles. We say general because we include the cases corresponding to $0 \leq \beta < 1$, where $\beta$ is the parameter that allows to model the $N$-dependence of the range of the interacting potential. The case $\beta=0$ corresponds properly to the mean-field approximation where the potential range is fixed and the intensity of the interacting potential decreases as $1/N$. Differently the regime corresponding to $0 < \beta < 1$ is more difficult  since the interaction potential goes to a delta function in the sense of the  measures convergence. This case is not very well studied and it  is usually denoted as the non linear Schroedinger limit (see \cite{Lewin}).
There are many results and some quantitative estimates of the convergence rate for small values of $\beta$ (see \cite{Lewin,Rougerie} and references therein).
We face the general mean-field convergence problem by using the hard results for the case $\beta=1$, known as Gross-Pitaevskii scaling limit, obtained in \cite{Lieb,Lieb2}  and, recently,  in \cite{Nam}. We prove the convergence of the one-particle ground-state energy to the ground-state energy of the non-linear Schroedinger functional for the case of purely repulsive interacting potential (Theorem \ref{theorem1}). After this, we are also able to discriminate how the single terms of the energy converge (Theorem \ref{theorem2}).\\ 

Nelson's Stochastic Mechanics (\cite{Nelson1,Nelson2,CarlenN})  allows to rigorously associate  a system of $N$ interacting diffusions to the $N$ body Hamiltonian (\cite{Carlen}) and consequently to consider the convergence problem of the one-particle probability measure on the path-space to the limit measure of McKean-Vlasov type. In this paper we prove that the convergence holds in total variation (Theorem \ref{theorem3}), which is stronger with respect to the weak convergence recently obtained in \cite{ADU} for the Gross-Pitaevskii scaling limit. 
Successively we establish both the usual Kac's chaos and a strong form of  Kac's chaos for the law of the $N$ interacting Nelson diffusions (Theorem \ref{theorem5}). It is well-known indeed that Kac's chaos is usually expressed as the week convergence of the law of any $k$-components diffusions to the asymptotic $k$-product measure. In our paper we state that the cited convergence on path space is in total variation sense (Theorem \ref{theorem4} and Theorem \ref{theorem5}).
The proof, based on Girsanov Theorem and on  the relative entropy between the two involved probability measures,  is essentially a probabilistic one.
The relevant analytic result used in both the proofs is the $ L^2$-convergence of the difference of the drifts which can be deduced from Theorem \ref{theorem2} without the usual assumptions of bounded or Lipschitz drifts (that are not  satisfied in our quantum mathematical setting).\\

The plan of the present paper is the following. In Section \ref{section_quantum} we introduce  the quantum framework of the derivation of the non-linear Schroedinger model from the initial $N$ body Hamiltonian describing the $N$ Bose particles through a suitable scaling limit of general mean-field type and we prove the weak-$L^1$ convergence of the ground-state energies to the asymptotic non linear energy as well as the convergence of the single terms of the associated energy functional.

In Section \ref{section_nelson} we briefly introduce the Nelson-Carlen scheme of Stochastic Mechanics. The total variation convergence of the one-particle measure on path-space is proved in Section \ref{section_totalvariation}. In Section \ref{section_kac_chaos}  both the usual and the strong Kac's chaos is established and in the  Section \ref{section_fisher} we derive the entropy chaos from Fisher chaos  for the fixed time symmetric probability law on the product space $\Real^{3N}$.  

\section{Convergence of the mean-field quantum energy functional}\label{section_quantum}

\vskip4pt
We want to study a thermodynamic limit of a system of Bosons in $\mathbb{R}^3$. For this reason we consider the following $N$-body quantum Hamiltonian
\begin{equation} \label{HN}
H_N=\sum _{i=1}^{N}\left(-\frac {\hbar^2}{2m}{\triangle}_{i}+V({\bf
r_i})\right)+ \sum _{1\leq i< j \leq N} v_N(\bf r_i- \bf r_j)
\end{equation}
\noindent where $V$ is a confining potential,  $v_N$ a pair-wise
repulsive interaction potential (depending on the number of particles for obtaining a meaningful thermodynamic limit) and ${\bf r}_i \in \R^3,
i=1,...,N$. It operates on symmetric wave functions $\Psi$ in the
complex $L^2(\Real^{3N})$-space in order to satisfy the symmetry
permutation prescription for Bose particles.

We consider the mean quantum mechanical energy
\begin{equation}\label{quantenergy}
{\mathcal E}_N(\Psi)=<\Psi, H_N \Psi>, \quad \Psi \in H^1(\R^{3N})
\end{equation}
If  there exists a minimizing
function $\Psi_N$ of ${\mathcal E}_N$  it is called a \textit{ground state}
and the corresponding energy $ E_N[\Psi_N]$ given by
$$E_N[\Psi_N]:=\inf\left\{\mathcal { E}(\Psi): \int_{\Real^{3N}} |\Psi|^2d{\bf r_1 }...d{\bf r_2 }=1\right\}$$
is known as \textit{ground state energy}.

\noindent Under suitable assumptions on the potentials $V$ and $v_N$
one can prove the existence of the ground state $\Psi_N$ for
\eqref{HN}. Uniqueness of the ground state is to be understood as
uniqueness  apart from an \textit{overall phase}.

The Bose-Einstein condensate is obtained as the thermodynamic limit of the previous $N$-body problem and it  is usually
described by a wave $\phi \in H^1(\R^3)$, also
called wave function of the condensate, which is the minimizer of
the Hartree or of the non-linear Schroedinger (energy) functional (depending on the dependence of $v_N$ from $N$) 
\begin{multline} \label{GPfuncbetazero}
{\mathcal E}^{H}[\phi] = \int  \left(\frac {\hbar^2}{2m}|\nabla \phi({\bf r})|^2 + V(r)|\phi({\bf r})|^2\right)d{\bf r} + \\
 \int \int |\phi({\bf r})|^2v_0({\bf r}-{\bf r}_1)|\phi({\bf r}_1)|^2d{\bf r}d{\bf r}_1, 
 \end{multline}

\begin{equation} \label{GPfuncbeta}
{\mathcal E}^{nlS}[\phi] = \int  \left(\frac {\hbar^2}{2m}|\nabla \phi({\bf r})|^2 + V(r)|\phi({\bf r})|^2 +  g  |\phi({\bf r})|^4\right)d{\bf r}, 
 \end{equation}
 \noindent under the $L^2$-normalization condition
  $$ \int_{\R^3} |\phi({\bf r})|^2d{\bf r}=1$$
and where $$g=\int v_0(x)dx.$$ 

Here $v_0$ denotes the potential of the interaction between two particles before the thermodynamic limit is taken and its relation with $v_N$ is given by the following equation 
\begin{equation}\label{eq:vn} 
v_N({\bf r})=\frac{N^{3\beta}}{ N-1}v_0(N^{\beta}{\bf r}), \quad \quad 0 \le \beta  < 1
\end{equation}

From now on we consider the more difficult case given by $\mathcal{E}^{nlS}$ which implies $\beta>0$. 

We denote by $E_{nlS}$ the minimum of the energy \eqref{GPfuncbeta} and by  $\phi_{nlS}$ the minimizer which solves the stationary cubic non-linear equation (called non-linear Schroedinger
equation, nlS, or Gross-Pitaevskii equation, GP) (\cite{Lewin}, or \cite{Gross,Pitae})

 \begin{equation}\label{GP}
  -\frac {\hbar^2}{2m}\triangle \phi +V\phi + 2g |\phi|^2 \phi =\lambda
\phi
 \end{equation}
  \noindent $\lambda$, the real-valued Lagrange multiplier of the normalization constraint, is usually called  chemical potential. For the GP case one can
prove that $\phi$ is continuously differentiable and strictly
positive (\cite{Lieb}).

A stochastic quantization approach for the system
of $N$ interacting Bose particles has been faced for the first
time  in \cite{LM}. 

In \cite{MU} it has been proved that to the
$N$-body problem associated to $H_N$ there correspond  a well
defined diffusion process  describing the motion of the
single particle in the condensate, under the
Gross-Pitaevskii scaling limit as introduced in \cite{Lieb}, which
allows to prove the existence of an exact Bose-Einstein
condensation for the ground state of $H^N$
(see \cite{Lieb,Lieb2}). For the time-dependent derivation of
the Gross-Pitaevskii equation see \cite{Adami} and \cite{Erdos}. For the non linear Schroedinger case see \cite{Erdos1}.

\noindent For simplicity of notations, let us put $\hbar=2m=1$.

We consider the mean energy \eqref{quantenergy} 
$${\mathcal E}_N[\Psi_N]=\int \sum_{i=1}^N(|\nabla_i \Psi_N|^2+V({\bf r}_i))| \Psi_N|^2+
\sum_{1\leq i< j\leq N} v_N({\bf r}_i-{\bf r}_j)| \Psi_N|^2d{\bf r}_1\cdot\cdot\cdot{\bf
r}_N$$  \vskip5pt

\noindent We assume

h1) $V(|\bf{r}_i|)$ is locally bounded, continuous, strictly positive and going to
infinity when $|\bf{r}_i|$ goes to infinity.

h2) $v_0$ is smooth, compactly supported, non negative, spherically
symmetric.

 \vskip 5pt
\begin{remark}
The scaling case with $\beta=1$ does not belong to the mean-field regime. It is known as Gross-Pitaevskii scaling limit and it involves the scattering length of the interaction potential. The convergence of the ground state energy in this setting is difficult and has been provided by \cite{Lieb,Lieb2,Nam}.
\end{remark}
In this paper we propose a proof of the above convergence result  in the thermodynamic limit for a generic mean-field case (i.e. with $0\le \beta < 1$), but only under the assumption h2) corresponding to a positive-definite interaction, by taking advantage of the hard results of the GP regime.

\vskip 5mm

\begin{theorem}\label{theorem1} Under the
previous hypothesis h1), h2) we have that
\begin{equation} \label{LimE}
\lim_{N\uparrow \infty}\frac{E_N[\Psi_N]}{N}=E_{nlS}[\phi_{nlS}]
\end{equation}
and
\begin{equation} \label{Limrho}
\lim_{N\uparrow \infty}\int |\Psi_N|^2d{\bf r}_2\cdot\cdot\cdot{\bf
r}_N =|\phi_{nlS}|^2
\end{equation}
where  $ \phi_{nlS}$ is the minimizer
of the non-linear Schroedinger functional \eqref{GPfuncbeta} and the
convergence is in the weak $L^1(\Real^3)$ sense.
\end{theorem}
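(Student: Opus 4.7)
The plan is to prove matching asymptotic upper and lower bounds for $E_N[\Psi_N]/N$, and then to deduce the weak-$L^1$ convergence of the one-particle marginal from the energy convergence by a standard compactness and uniqueness argument.

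\textbf{Upper bound.} First I would use as a trial state the normalized product $\Psi^{\mathrm{trial}}_N=\phi_{nlS}^{\otimes N}$. The one-body part of $\mathcal{E}_N[\Psi^{\mathrm{trial}}_N]/N$ reproduces $\int |\nabla\phi_{nlS}|^2+\int V|\phi_{nlS}|^2$ exactly, while the interaction contribution reads
\[
\frac{N-1}{2}\int\!\!\int |\phi_{nlS}({\bf r})|^2 v_N({\bf r}-{\bf r}') |\phi_{nlS}({\bf r}')|^2\, d{\bf r}d{\bf r}' = \frac{1}{2}\int\!\!\int |\phi_{nlS}({\bf r})|^2 N^{3\beta}v_0(N^\beta({\bf r}-{\bf r}'))|\phi_{nlS}({\bf r}')|^2\, d{\bf r}d{\bf r}'.
\]
Since $N^{3\beta}v_0(N^\beta\cdot)$ is an approximate identity converging to $g\,\delta_0$ as a measure, and $|\phi_{nlS}|^2$ has the regularity of the nlS minimizer under h1)--h2), a change of variables plus dominated convergence shows this term tends to the local nlS interaction $\int g|\phi_{nlS}|^4$ (modulo the $1/2$ normalization convention). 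Hence $\limsup_N E_N[\Psi_N]/N \le E_{nlS}[\phi_{nlS}]$.

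\textbf{Lower bound --- the main obstacle.} For the complementary bound I would invoke the Gross-Pitaevskii convergence theorem of Lieb-Seiringer-Yngvason and Nam \cite{Lieb,Lieb2,Nam} via a comparison with a $\beta=1$ scaling. The key observation is that for $0\le\beta<1$ the interaction $v_N$ is ``softer'' than the GP one, so the $s$-wave correlations that distinguish the GP coupling $4\pi a$ from the Born coupling become negligible in the limit; indeed the scattering length satisfies $a(\lambda v_0)=\lambda\,g/(8\pi)+O(\lambda^2)$ as $\lambda\to 0^+$. Concretely I would (i) for each $\lambda\in(0,1]$ bound $\sum_{i<j} v_N^{(\beta)}({\bf r}_i-{\bf r}_j)$ from below by $\sum_{i<j} w_N^{\lambda}({\bf r}_i-{\bf r}_j)$, where $w_N^\lambda$ is a $\beta=1$ scaling of a Jensen-style regularization of $\lambda v_0$, (ii) apply the GP lower bound to the resulting auxiliary Hamiltonian, obtaining $\liminf_N E_N[\Psi_N]/N \ge E_{GP}^\lambda[\phi_{GP}^\lambda]$ with coupling $4\pi a(\lambda v_0)$, and (iii) let $\lambda\to 1$ after using the Born expansion. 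The delicate point is step (i): the surrogate $w_N^\lambda$ must be chosen carefully so that the correct Born constant is recovered in the limit $\lambda\to 1$; any constant lost here at intermediate $\lambda$ propagates into a strict gap with $E_{nlS}[\phi_{nlS}]$.

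\textbf{Convergence of the marginal.} Writing $\rho_N({\bf r})=\int|\Psi_N|^2 d{\bf r}_2\cdots d{\bf r}_N$, from $\int V\rho_N+\int |\nabla\sqrt{\rho_N}|^2 \le E_N[\Psi_N]/N$ (the Hoffmann--Ostenhof inequality $\int|\nabla\sqrt{\rho_N}|^2\le \int|\nabla_1\Psi_N|^2 d{\bf r}_1\cdots d{\bf r}_N$ applied to the first coordinate) one obtains $H^1$-boundedness of $\sqrt{\rho_N}$ and, via h1), tightness; hence $\{\rho_N\}$ is weakly $L^1$-relatively compact by Dunford--Pettis. Any weak limit $\rho^\star$ is a probability density; lower semicontinuity of each piece of $E_{nlS}$ along weak-$L^1$ limits, combined with the already proved convergence $E_N/N\to E_{nlS}[\phi_{nlS}]$, forces $\rho^\star=|\phi^\star|^2$ with $\phi^\star$ a minimizer of $E_{nlS}$. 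Uniqueness of $\phi_{nlS}$ (up to a phase, by strict convexity of $E_{nlS}$ in the density under h2)) then yields convergence of the whole sequence.
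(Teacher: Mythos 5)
The upper bound via the product trial state is fine (the paper instead quotes the quantitative bound $E_N/N\le E_{nlS}+CN^{-\beta}$ from \cite{Lewin}), but your lower bound has a genuine gap, and it sits exactly at the point you flag. First, step (i) cannot be carried out: a $\beta=1$ scaled potential $w^\lambda_N$ has range of order $N^{-1}$ and height of order $N^{2}$, while $v_N$ has range $N^{-\beta}$ and height of order $N^{3\beta-1}$, so a pointwise (or quadratic-form) domination $w^\lambda_N\le v_N$ is impossible without destroying the coupling, and there is no Dyson-type lemma in this direction (Dyson's lemma trades a hard, short-range potential for a softer one, not the reverse). Second, even granting (i)--(ii), step (iii) does not close: the Born expansion $8\pi a(\lambda v_0)=\lambda\int v_0+O(\lambda^2)$ is an asymptotic statement at $\lambda\to0$, and for a nonnegative, not identically vanishing $v_0$ one has the strict inequality $8\pi a(v_0)<\int v_0$; hence sending $\lambda\to1$ leaves a non-vanishing deficit between your lower bound, whose coupling is $4\pi a(\lambda v_0)$, and $E_{nlS}$, whose coupling is $g$. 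The deficit vanishes only as $\lambda\to0$, where the coupling itself vanishes. The paper avoids any comparison altogether: it observes that $v_N$ is \emph{exactly} the diagonal member $u^N_N$ of the GP family $u^N_m(\mathbf{r})=\frac{m^3}{m-1}u^N_0(m\mathbf{r})$ built on the $N$-dependent base potential $u^N_0=N^{3\beta-3}v_0(N^{\beta-1}\cdot)$, which keeps $\int u^N_0=g$ while becoming weak and spread out, so that its scattering length tends to the Born value ($4\pi a_N\to g$ in the paper's normalization, proved there by an explicit maximum-principle argument). It then applies the quantitative Lieb--Seiringer--Yngvason lower bound $E^N_m\ge E^{N,R}_{GP}\bigl(1-C(R,\phi^{N,R}_{GP})\,m^{-1/10}\bigr)$ at $m=N$, and the real technical content is showing that the constant is bounded uniformly in $N$, via elliptic bootstrap estimates and Berge's maximum theorem giving $\phi^{N,R}_{GP}\to\phi^{R}_{nlS}$ in $C^1$. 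Neither the diagonal identity nor this uniformity-in-$N$ control appears in your plan, and without them the GP input cannot be transferred to $0\le\beta<1$.

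A secondary point: your argument for \eqref{Limrho} is also incomplete. Hoffmann--Ostenhof plus Dunford--Pettis gives compactness, but convergence of $E_N/N$ together with weak-$L^1$ lower semicontinuity of the separate pieces of $\mathcal{E}_{nlS}$ does not identify the limit: the many-body interaction term $\frac1N\sum_{i<j}\langle v_N\rangle_{\Psi_N}$ is not bounded below by $g\int(\rho^{(1)}_N)^2-o(1)$ without an additional ingredient (a positive-definiteness/Hartree-type bound involving the two-particle marginal, or a Feynman--Hellmann perturbation of the coupling constants, which is how the paper's Theorem \ref{theorem2} extracts the individual energy components). So the identification of every weak limit point with $|\phi_{nlS}|^2$ requires more than lower semicontinuity plus uniqueness of the minimizer.
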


\begin{remark} The one-particle marginal density $\rho^{(1)}_N$
converges weakly to $\rho_{nlS}$ in the sense that the probability
measures $\rho^{(1)}_Nd{\bf r}$ weakly converge as $N\rightarrow
\infty$ towards the probability measure $\rho_{nlS}d{\bf r}$ on
$\R^3$.
\end{remark}

\begin{proof}
First of all we note that the following estimate 
$$E_{nlS}+CN^{-\beta}\ge \frac{E_N}{N}$$
is proved in (\cite{Lewin}, Proposition 2.3 or Theorem 2.4). This means that $\lim_{N \uparrow \infty}\frac{E_N}{N}\leq E_{nlS}$.\\

In order to prove that $\lim_{N \uparrow \infty}\frac{E_N}{N}\geq E_{nlS}$ let us introduce the functions $u_m^N$ such that
\begin{equation}\label{scattering2}
u_m^N({\bf r})=\frac{m^{3}}{ m-1}u_0^N(m{\bf r})
\end{equation}
and
\begin{equation}\label{UV}
u_0^N({\bf r})={N^{3\beta-3}}v_0(N^{\beta-1}({\bf r}))
\end{equation}
By using the previous equations we obtain
\begin{equation} 
u_N^N({\bf r})=v_N({\bf r})=\frac{N^{3\beta}}{ N-1}v_0(N^{\beta}({\bf r})),\quad 0 \le \beta  < 1
\end{equation}
By relevant results on the convergence of ground-state energies in the GP scaling limit (see \cite{Lieb,LiebYng} which corresponds to considering $v_N$ as in equation \eqref{eq:vn})
if we denote by $E_m^N$ the ground-state energy associated to the potential $u_m^N$ we have that,  for all fixed $N$,
\begin{equation}
\lim_{m\uparrow \infty}\frac{1}{m}{E_m^N}=E^{N}_{GP}
\end{equation}
where $ {E}^{N}_{GP}$ is the minimum energy of 
\begin{equation} \label{GPfuncN}
 {\mathcal E}^N_{GP}[\phi] = \int  (|\nabla \phi({\bf r})|^2 + V(r)|\phi({\bf r})|^2 +  4\pi a_N  |\phi({\bf r})|^4)d{\bf r}
 \end{equation}
where $a_N$ is the scattering length of the potential $u_0^N$ (see \cite{Lieb,LiebBook} for the definition of scattering length).  

First of all we prove that $\lim_{N\uparrow \infty}4\pi a_N=g$.  
We note that by \eqref{UV} and by changing integration variables
\begin{equation} 
\int u_0^N({\bf r})d{\bf r}=\int v_0({\bf r})d{\bf r}=g
\end{equation}
By the upper bound for the scattering length (see \cite{LiebBook} Appendix B) we have
$$g=\int u_0^N({\bf r})d{\bf r}\ge 4\pi a_N$$
We look for an estimate from below for $ 4\pi a_N$ converging to $g$. Let  
us recall how  the scattering length is defined. Denoting by $R_0$ is the maximum radius of the support of $v_0$, let
$$R_N=N^{1-\beta}R_0$$
be the maximum radius of the support of $u_0^N$.
Setting
\begin{equation} \label{GPfuncNR}
{\mathcal E}^N_{R}[\phi] = \int_{B_R}  (|\nabla \phi({\bf r})|^2 + u_0^N({\bf r})|\phi({\bf r})|^2 )d{\bf r}
 \end{equation}
and denoting by
${\tilde  E}^N_{R}$ the minimum energy 
 with
respect to  $\phi$ in $L^2(B_R)$ (with $B_R$ the ball of radius $R$)
subject to the constrain $\|\phi\|_{B_R}=1$, we have for all $R \geq R_N$:

\begin{equation} 
{\tilde  E}^N_{R}=\frac{4\pi a_N R}{(R-a_N)}
\end{equation}

We observe that for $R > N^{1-\beta}R_0$  we can rewrite \eqref{GPfuncNR} as
\begin{equation} \label{GPfuncNRR}
{\mathcal  E}^N_{R}[\phi] = \int_{\frac{B_R}{N^{1-\beta}}}  ({N^{1-\beta}}|\nabla \phi({\bf r})|^2 + v_0({\bf r})|\phi({\bf r})|^2 )d{\bf r}
 \end{equation}
Denoting by
${ E}^N_{R}$ the minimum energy 
 with
respect to  $\phi$ in $L^2(\frac{B_R}{R})$ 
subject to the constrain $\|\phi\|_{\frac{B_R}{R}}=1$, we want to give a lower bound for  ${ E}^N_{R}$.

If $\phi^N_R$ denotes the minimizer of the previous functional we have that

$${ E}^N_{R}\ge  \int_{\frac{B_R}{N}} v_0({\bf r})  |\phi^N_R({\bf r})|^2 d{\bf r}$$

So it is sufficient to give a lower bound for $ |\phi^N_R|$. To this aim let us introduce the potential:
$$ v^k_0({\bf r})=\min(v_0({\bf r}), k), \quad k >0$$

If $E^{N,k}_R$ denotes the minimum energy when the interaction potential is  $v^k_0({\bf r})$, then we note that
\begin{equation}\label{lowerboundR}
 E^{N}_R \ge E^{N,k}_R\ge \int_{\frac{B_R}{N^{1-\beta}}} v^k_0({\bf r})  |\phi^{N,k}_R({\bf r})|^2 d{\bf r}
\end{equation}
where the function $\phi^{N,k}_R$ satisfies the equation $L^k_N(\phi^{N,k}_R)=0$ with $L^k_N  $ given by
$$ L^k_N( \phi)=N^{1-\beta}\triangle \phi +v^k_0 \phi$$
By the maximum principle if 
$$ L^k_N( \phi)\le 0, \quad \quad \phi \ge 0, \quad \|\phi\|_{\frac{B_R}{N^{1-\beta}}}=1$$
then $ \phi \le  \phi^{N,k}_R.$

We choose $\phi_R^{N,k,\epsilon}=C+\epsilon ||x^2||$ where $\epsilon$ is such that:
$$ -3 N^{1-\beta}\epsilon +(C+\epsilon R_0^2)k \le 0$$
When $N\uparrow \infty$ we can choose $\epsilon \downarrow 0$ and $C$ such that
$$ \left(C+\epsilon \frac{R_0^2}{N^{1-\beta}}\right)=1$$
In particular we have:
$$\phi^{N,k}_R \ge \phi_R^{N,k,\epsilon}\ge 1-\epsilon  \frac{R^2}{N^{1-\beta}}$$
Therefore from \eqref{lowerboundR} we get
$$ E^{N}_R \ge E^{N,k}_R\ge \left(\int v_0^k({\bf r})d{\bf r}\right)\left(1-\epsilon  \frac{R^2}{N^{1-\beta}}\right)$$
Sending $\epsilon \downarrow 0$, $N\uparrow \infty$, $k\uparrow \infty$ and $R\uparrow \infty$ we obtain
$$\lim_{N\uparrow \infty}4\pi a_N\ge \lim_{N\uparrow \infty R\uparrow \infty}\left(\frac{4\pi a_NR}{R-a_N}\right)\ge$$
$$\ge \lim_{k\uparrow \infty}\int v_0^k({\bf r})d{\bf r}=\int v_0({\bf r})d{\bf r}.$$
Therefore
$$\lim_{N\uparrow \infty}4\pi a_N=\int v_0({\bf r})d{\bf r}.$$
\\

Let us now consider the following functional
\begin{equation} \label{GPfuncNk}
 \mathcal{E}^{N,R}_{GP}[\phi] = \int (|\nabla \phi({\bf r})|^2 + V(r)|\phi({\bf r})|^2 +  4\pi a_N  |\phi({\bf r})|^4)d{\bf r}
\end{equation}
and let $E^{N,R}_{GP}=\mathcal{E}^{N,R}_{GP}[\phi]$ where $\phi$ is in $H^1({B_R})$ 
subject to the constrain $\|\phi\|=1$ and with Neumann boundary conditions.

In \cite{Lieb} it has be proven that
\begin{equation} \label{LimEN}
{E_m^N}\ge E^{N,R}_{GP}(1-C(R,\phi^{N,R}_{GP})m^{-\frac{1}{10}})
\end{equation}
where $C(R,\phi^{N,R}_{GP})$ is a continuous, locally bounded function of $R$ and of the minimizer $\phi^{N,R}_{GP}$ with respect to the norm 
$\|\phi_{GP}\|_{L^{\infty}}+\|\nabla \phi_{GP}\|_{L^{\infty}} .$
Indeed in the proof of Theorem 4.1 in \cite{Lieb} we can see that $\phi^{N,R}_{GP}$ depends from both
$$ \min_{\Lambda_L}|\phi^{N,R}_{GP}|^2,\quad  \max_{\Lambda_L}|\phi^{N,R}_{GP}|^2- \min_{\Lambda_L}|\phi^{N,R}_{GP}|^2$$
where ${\Lambda_L}$ is the box of length $L$ and that the two previous quantities are continuous when  $\phi^{N,R}_{GP}$ varies in a continuous way with respect to the norm $\|\phi_{GP}\|_{L^{\infty}}+\|\nabla \phi_{GP}\|_{L^{\infty}} .$
Consequently if we are able to prove that $C(R,\phi^{N,R}_{GP})$ is bounded with respect to $N$ then we obtain
\begin{equation} 
\lim_{N\uparrow \infty}\frac{E_N}{N}=\lim_{N\uparrow \infty}\frac{E_N^N}{N}\ge \lim_{N\uparrow \infty} \left[E^{N,R}_{GP}(1-C(R,\phi^{N,R}_{GP})N^{-\frac{1}{10}})\right]=\lim_{N\uparrow \infty}E^{N,R}_{GP}.
\end{equation}
On the other hand, since $4\pi a_N \rightarrow g$, we have that $\lim_{N\uparrow \infty}E^{N,R}_{GP}=E^R_{nlS}$, where $E^R_{nlS}$ is the minimum of the functional
\begin{equation}
 {\mathcal E}^{R}_{nlS}[\phi] = \int  (|\nabla \phi({\bf r})|^2 + V(r)|\phi({\bf r})|^2 +  g  |\phi({\bf r})|^4)d{\bf r}
\end{equation} 
 with
respect to  $\phi$ in $H^1({B_R})$ 
subject to the constraint $\|\phi\|=1$ with Neumann boundary conditions. \\
Since $ E^{R}_{nlS}$ converges to $ E_{nlS}$ when ${N\uparrow \infty}$ we finally have
\begin{equation} 
\lim_{N\uparrow \infty}\frac{E_N}{N}\ge  E_{nlS}
\end{equation}\\

It remains to prove that $C(R,\phi^{N,R}_{GP})$ is bounded with respect to $N$. We provide this by showing that 
\begin{equation} \label{phiconvergence}
\lim_{N\uparrow \infty}\phi^{N,R}_{GP}=\phi^R_{nlS}
\end{equation}
with respect to the norm $\|\cdot\|_{L^{\infty}}+\|\nabla\cdot\|_{L^{\infty}}$.\\
Let us first note that $\phi^{N,R}_{GP}$ satisfies the equation
 \begin{equation}\label{GPNR}
  -\triangle \phi^{N,R}_{GP} +V\phi^{N,R}_{GP} + 8\pi a_N |\phi^{N,R}_{GP}|^2 \phi^{N,R}_{GP} =\mu^R_{GP}(a_N)\phi^{N,R}_{GP}
 \end{equation}
from which we obtain that $\triangle \phi^{N,R}_{GP}\in L^{3/2+\epsilon}(B_R)$ with the bounds
$$\|\triangle \phi^{N,R}_{GP} \|_{L^{3/2+\epsilon}(B_R)} \leq C_1 (a_N^{3/2}+1) \int_{B_R}{(\phi^{N,R}_{GP}({\bf r}))^4d{\bf r}} \leq C_1
\frac{a_N^{3/2}+1}{4\pi a_N} E^{N,R}_{GP},$$
where $C_1$ is positive constant independent  of $N$. Using a bootstrap argument we obtain that $\|\phi^{N,R}_{GP}  \|_{C^{2-\epsilon}} \leq F(E^{N,R}_{GP})$ where $F$ is a continuous increasing function from $\mathbb{R}_+$ into itself and $C^{2-\epsilon}$ is the space of $2-\epsilon$ H\"older functions with $0<\epsilon<1$. Since $a_N\rightarrow g$ and $E^{N,R}_{GP}$ depends continuously on $a_N$, we have that $\sup_N \|\phi^{N,R}_{GP}  \|_{C^{2-\epsilon}} <+\infty$, and so $\phi^{N,R}_{GP}$ stays in a compact set of $C^1$ with respect $\|\cdot\|_{L^{\infty}}+\|\nabla\cdot\|_{L^{\infty}}$ norm. Since equation \eqref{GPNR} has a unique solution and by Berge Maximum Theorem (see \cite{Aliprantis} Theorem 17.31) the map $a_N \longmapsto \phi^{N,R}_{GP}$ is continuous with respect to $\|\cdot\|_{L^{\infty}}+\|\nabla\cdot\|_{L^{\infty}}$ norm, we have that $\lim_{N\uparrow \infty}\phi^{N,R}_{GP}=\phi^R_{nlS}$ in $C^1$ and so $C(R,\phi^{N,R}_{GP})$ is bounded with respect to $N$.
\end{proof}

We aim at characterizing the limit of
the single components of the ground state energy $E_N[\Psi_N]$.\\
Let us introduce the following energy functionals, for any $\lambda>0$:
\begin{equation}\label{energy1}
{\mathcal E}^{1}[\Psi_N,\lambda]=\int \sum_{i=1}^N(|\nabla_i \Psi_N|^2+\lambda V({\bf r}_i))| \Psi_N|^2+
\sum_{1\leq i< j\leq N} v({\bf r}_i-{\bf r}_j)| \Psi_N|^2d{\bf r}_1\cdot\cdot\cdot{\bf
r}_N,
\end{equation}
\begin{equation}\label{energy2}
{\mathcal E}^{2}[\Psi_N,\lambda]=\int \sum_{i=1}^N(|\nabla_i \Psi_N|^2+V({\bf r}_i))| \Psi_N|^2+
\sum_{1\leq i< j\leq N}\lambda v({\bf r}_i-{\bf r}_j)| \Psi_N|^2d{\bf r}_1\cdot\cdot\cdot{\bf
r}_N,
\end{equation}
\begin{equation} \label{GPfunc1}
{\mathcal E}^{1}_{nlS}[\phi,\lambda] = \int  \left(\frac {\hbar^2}{2m}|\nabla \phi({\bf r})|^2 + \lambda V(r)|\phi({\bf r})|^2 +  g  |\phi({\bf r})|^4\right)d{\bf r}, \quad \beta >0,
 \end{equation}
\begin{equation} \label{GPfunc2}
{\mathcal E}^{2}_{nlS}[\phi,\lambda] = \int  \left(\frac {\hbar^2}{2m}|\nabla \phi({\bf r})|^2 + V(r)|\phi({\bf r})|^2 + \lambda g  |\phi({\bf r})|^4\right)d{\bf r}, \quad \beta >0
 \end{equation}
We denote by $E^1_N(\lambda)$, $E^2_N(\lambda)$, $E^1_{nlS}(\lambda)$, $E^2_{nlS}(\lambda)$ the minimum of the four above energy functionals respectively.\\

We introduce the following hypothesis.
\begin{hypothesis}
For $\lambda$ in an neighbour  of $1$
\begin{equation} \label{LimE1}
\lim_{N\rightarrow \infty}\frac{E^1_N(\lambda)}{N}=E^1_{nlS}(\lambda)
\end{equation}
\begin{equation} \label{LimE2}
\lim_{N\rightarrow \infty}\frac{E^2_N(\lambda)}{N}=E^2_{nlS}(\lambda)
\end{equation}
\end{hypothesis}
If Theorem \ref{theorem1} holds, then {Hypothesis A} is true when $v({\bf r})\in L^1(\R^3)$, positive and with compact support. Indeed $g_{\lambda}=\int_{\R^3}\lambda v({\bf r})d{\bf r}=\lambda g$.

\begin{theorem}[Energy Components]\label{theorem2} 
Under the same hypothesis h1),h2) as in Theorem \ref{theorem1}, and the {Hypothesis A}, let us suppose that ${\mathcal E}^{1}_{nlS}[\phi,\lambda]$  and  ${\mathcal E}^{2}_{nlS}[\phi,\lambda]$ admit a unique minimizer in a neighbour of $\lambda=1$.  Then
\begin{multline}\label{E1}
\lim _{N\uparrow \infty} \int_{\R^{3}} \int_{\R^{3N-3}} |\nabla_1
\Psi_N({\bf r}_1,...,{\bf r}_N)|^2d{\bf r}_1
\cdot\cdot\cdot d{\bf
r}_N=\int_{\R^3} |\nabla \phi_{nlS}({\bf r})|^2 d{\bf r}
\end{multline}
and, moreover,
\begin{equation}\label{E2}
\lim _{N\uparrow \infty} \int_{\R^{3}} \int_{\R^{3N-3}}  V({\bf
r}_1)|\Psi_N({\bf r}_1,...,{\bf r}_N)|^2 d{\bf r}_1 \cdot\cdot\cdot
d{\bf r}_N= \int_{\R^{3}} V({\bf r})|\phi_{nlS}({\bf r})|^2 d{\bf r}
\end{equation}
\begin{equation}\label{E3}
\lim _{N\uparrow \infty}{\frac{1}{2}}\sum_{j=2}^{N} \int_{\R^{3}}
\int_{\R^{3N-3}}  v(|{\bf r}_1-{\bf r}_j|)|\Psi_N({\bf
r}_1,...,{\bf r}_N)|^2d{\bf r}_1\cdot\cdot\cdot d{\bf r}_N=\\
g\int_{\R^{3}} |\phi_{nlS}({\bf r})|^4 d{\bf r}
\end{equation}
where $\Psi_N$ and $\phi_{nlS}$ are the unique minimizers of ${\mathcal E}_N$ and ${\mathcal E}_{nlS}$ respectively.
\end{theorem}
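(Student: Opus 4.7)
The plan is to deduce the convergence of the individual energy components from Hypothesis~A via a Hellmann--Feynman (envelope) argument grounded in the concavity of the perturbed ground-state energies. The starting observation is that, for each fixed $N$, the map $\lambda\mapsto E^i_N(\lambda)/N$ is concave: indeed ${\mathcal E}^i_N[\Psi,\lambda]$ is affine in $\lambda$ for every admissible $\Psi$, and an infimum of affine functions is concave. The same reasoning yields concavity of $\lambda\mapsto E^i_{nlS}(\lambda)$ for $i=1,2$. Hypothesis~A then gives pointwise convergence of the concave functions $E^i_N(\cdot)/N \to E^i_{nlS}(\cdot)$ on a neighbourhood of $\lambda=1$.

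Second, the assumed uniqueness of the minimizer of ${\mathcal E}^i_{nlS}[\cdot,\lambda]$ near $\lambda=1$, combined with a Danskin-type envelope argument, implies that $E^i_{nlS}(\lambda)$ is differentiable at $\lambda=1$ with
\[
(E^1_{nlS})'(1)=\int_{\R^3} V({\bf r})\,|\phi_{nlS}({\bf r})|^2\,d{\bf r},\qquad (E^2_{nlS})'(1)=g\int_{\R^3}|\phi_{nlS}({\bf r})|^4\,d{\bf r}.
\]
I would then invoke the classical fact that if a sequence of concave functions converges pointwise on an open interval to a function differentiable at $\lambda_0$, then every sequence of supergradients at $\lambda_0$ converges to the derivative of the limit.

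Third, I identify a natural supergradient of $E^i_N(\cdot)/N$ at $\lambda=1$ in terms of the ground state $\Psi_N$. By minimality of $\Psi_N$ for ${\mathcal E}^1_N[\cdot,1]$ and the affine dependence on $\lambda$,
\[
\frac{E^1_N(\mu)}{N}\le \frac{1}{N}{\mathcal E}^1_N[\Psi_N,\mu]=\frac{E^1_N(1)}{N}+(\mu-1)\int_{\R^3} V({\bf r})\,\rho^{(1)}_N({\bf r})\,d{\bf r},
\]
where $\rho^{(1)}_N({\bf r}_1)=\int_{\R^{3N-3}}|\Psi_N|^2\,d{\bf r}_2\cdots d{\bf r}_N$ and bosonic symmetry has been used to reduce $N^{-1}\sum_i\int V({\bf r}_i)|\Psi_N|^2$ to $\int V\rho^{(1)}_N$. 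Hence $\int V\rho^{(1)}_N$ is a supergradient of $E^1_N(\cdot)/N$ at $\lambda=1$, and the supergradient-convergence lemma yields \eqref{E2}. Applying the identical argument to $E^2_N$ identifies $\frac{1}{2}\sum_{j=2}^N\int v({\bf r}_1-{\bf r}_j)|\Psi_N|^2\,d{\bf r}$ (again by symmetry) as a supergradient at $\lambda=1$, giving \eqref{E3}.

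Finally, \eqref{E1} is obtained by subtraction from Theorem~\ref{theorem1}: by bosonic symmetry,
\[
\frac{E_N[\Psi_N]}{N}=\int|\nabla_1\Psi_N|^2\,d{\bf r}+\int V({\bf r}_1)|\Psi_N|^2\,d{\bf r}+\frac{1}{2}\sum_{j=2}^N\int v({\bf r}_1-{\bf r}_j)|\Psi_N|^2\,d{\bf r},
\]
and since $E_N/N\to E_{nlS}=\int|\nabla\phi_{nlS}|^2+\int V|\phi_{nlS}|^2+g\int|\phi_{nlS}|^4$ by Theorem~\ref{theorem1}, combining with \eqref{E2}--\eqref{E3} forces the kinetic term to converge as required. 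The main obstacle is the envelope step: one must check that uniqueness of the limit minimizer genuinely yields differentiability of $E^i_{nlS}$ at $\lambda=1$ (so that the supergradient sets reduce to singletons), and that the candidate quantities identified above really are supergradients, so that the concave-convergence lemma applies to each family $E^i_N(\cdot)/N$ separately.
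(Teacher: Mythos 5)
Your proposal is correct and follows the same overall strategy as the paper: perturb the two energy functionals linearly in $\lambda$, use concavity in $\lambda$ together with Hypothesis A to pass derivatives to the limit, and identify the derivatives with the potential and interaction components via a Hellmann--Feynman/envelope identity, finally recovering \eqref{E1} from Theorem \ref{theorem1} by subtraction. The execution differs in packaging in ways worth noting. On the finite-$N$ side the paper invokes the Hellmann--Feynman principle for $\partial_\lambda E^i_N(\lambda)$, tacitly assuming differentiability (and uniqueness of $\Psi_N^{i,\lambda}$), whereas you only exhibit $\int V\rho^{(1)}_N$ and $\tfrac12\sum_{j\ge 2}\int v({\bf r}_1-{\bf r}_j)|\Psi_N|^2$ as supergradients of the concave maps $E^i_N(\cdot)/N$ at $\lambda=1$ and then use the standard fact that supergradients of pointwise-convergent concave functions converge to the derivative of a limit differentiable at that point; this is the cleaner and more robust version of the same step. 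On the limit side the paper obtains $\partial_\lambda E^2_{nlS}(\lambda)=g\int|\phi^{2,\lambda}_{nlS}|^4$ by differentiating the Euler--Lagrange equation \eqref{GP2lambda}, using Berge's maximum theorem for continuity of $\lambda\mapsto\phi^{2,\lambda}_{nlS}$ and concavity to upgrade a.e.\ differentiability to $C^1$; you instead invoke a Danskin-type envelope theorem from uniqueness of the limit minimizer. These are essentially equivalent, and both require the same unaddressed technical input (a compactness/continuity argument ensuring the superdifferential of $E^i_{nlS}$ reduces to the slope at the unique minimizer), which you correctly flag as the remaining point to check and which the paper handles only at the same informal level. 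Finally, your explicit subtraction step for the kinetic term \eqref{E1} is a genuine addition: the paper's proof stops at the analogues of \eqref{E2} and \eqref{E3} and leaves \eqref{E1} implicit, so spelling out that bosonic symmetry plus Theorem \ref{theorem1} forces the kinetic limit is a useful completion.
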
 \vskip 10pt

\begin{proof}[Proof of Theorem \ref{theorem2}]
Since ${\mathcal E}^{1}[\Psi_N,\lambda], {\mathcal E}^{2}[\Psi_N,\lambda], {\mathcal E}^{1}_{nlS}[\phi,\lambda], {\mathcal E}^{2}_{nlS}[\phi,\lambda]$ 
depend linearly by $\lambda$ we have that  $E^1_N(\lambda)$, $E^2_N(\lambda)$, $E^1_{nlS}(\lambda)$, $E^2_{nlS}(\lambda)$ are concave in $\lambda$ (as minimizers of functionals which are linear in 
$\lambda$). 

Thanks to Hypothesis A and using the fact that $E^1_N(\lambda)$, $E^2_N(\lambda)$, $E^1_{nlS}(\lambda)$, $E^2_{nlS}(\lambda)$ are concave we have
\begin{equation} 
\lim_{N\uparrow \infty}\frac{1}{N}{\partial_{\lambda}E^1_N(\lambda)}=\partial_{\lambda}E^1_{nlS}(\lambda)
\end{equation}
\begin{equation} 
\lim_{N\uparrow \infty}\frac{1}{N}{\partial_{\lambda}E^2_N(\lambda)}=\partial_{\lambda}E^2_{nlS}(\lambda)
\end{equation}
whenever $\partial_{\lambda}E^1_{nlS}(\lambda)$ and $\partial_{\lambda}E^2_{nlS}(\lambda)$ are well defined. By using the Hellman-Feynman Principle  we obtain
\begin{equation} 
\frac{\partial_{\lambda}E^1_N(\lambda)}{N}= \int_{\R^{3}} \sum_{i=1}^{N}  V({\bf
r}_1)|\Psi_N^{1,\lambda}({\bf r}_1,...,{\bf r}_N)|^2 d{\bf r}_1 \cdot\cdot\cdot
d{\bf r}_N
\end{equation}
\begin{equation} 
\frac{\partial_{\lambda}E^2_N(\lambda)}{N}= \int_{\R^{3}} \sum_{ i< j}   v(|{\bf r}_i-{\bf r}_j|)|\Psi_N^{2,\lambda}({\bf r}_1,...,{\bf r}_N)|^2 d{\bf r}_1 \cdot\cdot\cdot
d{\bf r}_N
\end{equation}
where $\Psi_N^{i,\lambda}$ is the minimizer of  ${\mathcal E}^{i}[\Psi_N,\lambda]$ with $i=1,2$.\\

For calculating the derivatives of (for example) $E^{2}_{nlS}(\lambda)$ we recall that
 \begin{equation}\label{GP2lambda}
  -\triangle \phi^{2,\lambda}_{nlS} +V\phi^{2,\lambda}_{nlS} + 2\lambda g |\phi^{2,\lambda}_{nlS}|^2 \phi^{2,\lambda}_{nlS} =\mu_{nlS}(\lambda)\phi^{2,\lambda}_{nlS},
 \end{equation}
 where $\mu_{nlS}(\lambda)= E^{2}_{nlS}(\lambda)+\lambda g \int_{\R^3} |\phi^{2,\lambda}_{nlS}|^4d{\bf r}$. It is simple to see that the function $\mu_{nlS}(\lambda)$ is continuous with respect to $\lambda$ and it is differentiable whenever $E^{2}_{nlS}(\lambda)$ is differentiable.

Since ${\mathcal E}^{2}_{nlS}[\lambda]$ has only one minimum, using a reasoning similar to the one of the proof of Theorem \ref{theorem1} and by Berge Maximum Theorem (see \cite{Aliprantis}) the map $\lambda \longmapsto \phi_{nlS}^{2,\lambda}$ is a continuous function in $\lambda$ and also differentiable whenever $\partial_{\lambda}E^2_{nlS}(\lambda)$ is well defined.

By differentiating \eqref{GP2lambda} with respect to $\lambda$, when $E^2_{nlS}(\lambda)$ is differentiable, and by multiplying for $\phi_{nlS}^{2,\lambda}$ and finally by integrating we obtain

 \begin{multline}
  -\int  \phi^{2,\lambda}_{nlS}\triangle \phi^{2,\lambda}_{nlS} +\int V\phi^{2,\lambda}_{nlS} \partial_{\lambda} \phi^{2,\lambda}_{nlS} + 2\lambda g \int |\phi^{2,\lambda}_{nlS}|^3  \partial_{\lambda}\phi^{2,\lambda}_{nlS} =\\
=\mu_{nlS}(\lambda)\int \phi^{2,\lambda}_{nlS} \partial_{\lambda} \phi^{2,\lambda}_{nlS} +  \partial_{\lambda}E^{2}_{nlS}[\lambda]-g\int  |\phi^{2,\lambda}_{nlS}|^4
 \end{multline}
Since equation \eqref{GP2lambda} holds, we deduce that 
\begin{equation}
 \partial_{\lambda}E^{2}_{nlS}(\lambda)=g\int  |\phi^{2,\lambda}_{nlS}|^4
\end{equation}
Now, $\phi^{2,\lambda}_{nlS}$ is continuous with respect to $\lambda$ and since $E^{2}_{nlS}(\lambda)$ is concave it is almost everywhere differentiable. Then $E^{2}_{nlS}(\lambda)$ is continuously differentiable $\forall \lambda$. By the concavity of the previous functions we finally obtain
\begin{equation} 
\lim_{N\uparrow \infty} \int_{\R^{3N}} \sum_{i=1}^{N}  V({\bf
r}_1)|\Psi_N^{1,\lambda}({\bf r}_1,...,{\bf r}_N) |^2d{\bf r}_1 \cdot\cdot\cdot
d{\bf r}_N=\int V({\bf
r}_1)|\phi_{nlS}^{1,\lambda}({\bf r})|^2 d{\bf r}
\end{equation}
\begin{equation} 
\lim_{N\rightarrow \infty} \int_{\R^{3N}} \sum_{ i< j}   v(|{\bf r}_i-{\bf r}_j|)|\Psi_N^{2,\lambda}({\bf r}_1,...,{\bf r}_N)|^2 d{\bf r}_1 \cdot\cdot\cdot
d{\bf r}_N=\int g |\phi_{nlS}^{2,\lambda}({\bf r})|^4d{\bf r} 
\end{equation}
\end{proof}

\section{ Stochastic mechanics and Bose-Einstein condensation}\label{section_nelson}

\vskip5pt

Nelson's Stochastic Mechanics  allows to study quantum phenomena using a
well determined class of diffusion processes
(\cite{Nelson1,Nelson2,Carlen,Carlen2}). See
\cite{CarlenN} for a more recent review on Stochastic Mechanics.

We will briefly introduce the class of \textit{Nelson} diffusions
which are associated to a solution of a Schr\"{o}dinger equation.

Let the complex-valued function (\textit{wave function})
$\psi(x,t)$ be a solution of the equation:
\begin{equation}
i \partial_t \psi(x,t)=H\psi(x,t),\quad t\in {\R}, \quad x\in
{\R}^d,
\end{equation}
\noindent  with $\psi(x,0)=\psi_0(x)$, corresponding to the
Hamiltonian operator:
$$ H=-\frac{\hbar^2}{2m}\triangle +V(x),$$
\noindent where $\hbar$ denotes the reduced Planck constant, $m$
denotes the mass of a particle, and $V$ is some scalar potential such that $H$ is realized as a self-adjoint lower semibounded operator on a dense domain $\mathcal{D}(H) \subset L^2(\mathbb{R}^d)$.

Let us set:
\begin{equation}\label{osmvelocity}
u(x,t):=\mathfrak{Re}\left[\frac{\nabla\psi(x,t)}{\psi(x,t)}\right]
\end{equation}
\begin{equation}\label{curvelocity}
v(x,t):=\mathfrak{Im}\left[ \frac{\nabla\psi(x,t)}{\psi(x,t)}\right]
\end{equation}
\noindent when $\psi(x,t)\neq 0$ and, otherwise, set both $u(x,t)
$ and $v(x,t) $ to be  equal to zero.  Let us put
\begin{equation}
b(x,t):=u(x,t)+v(x,t)
\end{equation}

Let $(\Omega, \mathcal F, \mathcal F_t,X_t) $, $t\geq 0$, with
$\Omega=C(\Real_+,\Real^d)$, be the evaluation stochastic process
$X_t(\omega)=\omega(t)$, with $\mathcal F_t=\sigma(X_s, s\leq t)$
the natural filtration.

Carlen (\cite{Carlen,Carlen2,Carlen3}) proved that
if  $||\nabla \Psi||_2^2<\infty$
then there exists a unique Borel
probability measure $\mathbb P$ on $\Omega$ such that
\begin{enumerate}[i)]

\item $(\Omega, \mathcal F, \mathcal F_t,X_t,\mathbb P) $ is a Markov
process;

\item the image of $\mathbb P$ under $X_t$ has a density
$\rho(t,x):=\rho_t(x)$, for every $t>0$;

\item $W_t:=X_t-X_0-\int_0^tb(X_s,s)ds$  is a $(\mathbb P,\mathcal F_t)$-Brownian Motion.

\end{enumerate}

\vskip 10pt The continuity problem for the above
Nelson-Carlen map (from solutions of Schr\"{o}dinger equations to
probability measures on the path space given by the laws of the corresponding
Nelson-Carlen diffusions $X_t$) is investigated in \cite{DPos}. For a
generalization to the case of Hamiltonian operators with magnetic
potential see \cite{PosU}.

From now on we will mainly consider the case where $d=3$.

We adopt the following notations: capital letters for stochastic
processes or, otherwise, we will explicitly specify them, $\hat
Y=(Y_1,...,Y_N)$ to denote arrays in $\R^{3N}$, $N\in {\mathbb
N}$, and bold letters for vectors in $\R^3$.

\vskip4pt

Here we precisely identify the interacting diffusions system
rigorously associated to the ground state solution $\Psi^0_N$ of
the Hamiltonian \eqref{HN}.

Introducing the probability space $(\Omega^N, \mathcal F^N,
\mathcal F^N_t, \hat{Y}_t) $, with $\hat Y_t(\omega)=\omega(t)$ the
evaluation stochastic process, with $\mathcal F^N_t=\sigma(Y_s,
s\leq t)$ the natural filtration, since $||\nabla \Psi_N||^2 \le \infty$, then by Carlen's Theorem there
exists an unique Borel probability measure ${\mathbb P}_N$ such
that
\begin{enumerate}[ i)]

\item $(\Omega^N, \mathcal F^N, \mathcal F^N_t,\hat Y_t,\mathbb P_N) $
is a Markov process;

\item the image of $\mathbb P_N$ under $\hat Y_t$ has density
$\rho_N({\bf r})$;

\item $\hat W_t:=\hat Y_t-\hat Y_0-\int_0^tb_N(\hat Y_s)ds$
 where
$$b_N(\hat Y_t):=\frac
{\nabla^{(N)}\Psi^0_N}{\Psi^0_N}=\frac{1}{2}\frac
{\nabla^{(N)}\rho_N}{\rho_N}.$$
\end{enumerate}

The stationary probability measure ${\mathbb P}_N$ with density
$\rho_N$ can be alternatively defined as the one of the Markov diffusion process (properly) associated to the
Dirichlet form (\cite{AU,Fukushima,Fukushima2,MR}):

\begin{equation}
\epsilon_{\rho_N}(f,g):=\frac{1}{2}\int_{\Real^{3N}}\nabla f({
r})\cdot\nabla g({ r})\rho_Nd{ r}^{3N}\quad \quad f,g \in
C_c^{\infty}(\Real^{3N})
\end{equation}

\section{A total variation convergence of the one-particle measure on the path space}\label{section_totalvariation}

In the present section we focus on a convergence problem for the
probability measure on the path
space corresponding to the one-particle process.

We consider the measurable space $(\Omega^N,\mathcal F ^N)$  where
$\Omega$ is $C(\R^+ \rightarrow \R^{3})$, $N\in {\mathbb N}$ and
$\mathcal F $ is its Borel sigma-algebra as introduced in Section
3. We denote by $\hat Y := (Y_1,\dots,Y_N)$ the coordinate process
and by $\mathcal F ^N_t$ the natural filtration.

Let us  introduce a process $X^{nlS}$ with invariant density
$\rho_{nlS}$, that is we assume that $X^{nlS}$ is a weak solution of the SDE 
\begin{equation} \label{SDE}
dX^{nlS}_t:= u_{nlS}(X^{nlS}_t)dt + \left(\frac {\hbar}{m}\right)^{\frac 1
2}dW_t
\end{equation}

\noindent where,
$$
u_{nlS} := \frac {1}{2}\frac {\nabla \rho_{nlS}}{\rho_{nlS}}
$$

The vector field $u_{nlS}$ is well defined since $\rho_{nlS}$ is continuously differentiable and strictly positive under hypothesis h1) and h2) (see \cite{Lieb,LiebBook}).  We denote again by ${\mathbb P}_N$  the measure corresponding to the weak solution of the
$3N$- dimensional stochastic differential equation

\begin{equation} \label {SDEa}
\hat Y_t-\hat Y_0 = \int_0^t \hat b ^N (\hat Y_s)ds +\hat W_t
\end{equation}

\noindent where $ \hat Y_0$  is a random variable with probability
density equal to $ \rho_N$, while  $\hat W_t$ is a
$3N$-dimensional $\mathbb P_N$ standard
Brownian motion.

In this section we use the shorthand notation $ \hat {b}^N_s=:\hat
{b}^N(\hat {Y}_s) $.

We denote by ${\mathbb
P}^N_{nlS}$ the measure corresponding to the weak solution of the
$3N$- dimensional stochastic differential equation

\begin{equation} \label {SDEb}
\hat Y_t-\hat Y_0 = \int_0^t \hat u_{nlS} (\hat Y_s)ds +\hat W_t',
\end{equation}

\noindent where
$$
\hat u_{nlS}({\bf r}_1,\cdots,{\bf r}_N)=(u_{nlS}({\bf
r}_1),\cdots,u_{nlS}({\bf r}_N)),
$$
\noindent $ \hat Y_0$  is a random variable with probability
density equal to $ \rho_N$ and  $\hat W_t'$ is a
$3N$-dimensional  $\mathbb P^N_{nlS}$ standard
Brownian motion.

Following \cite{MU}, the next lemma  computes the one-particle relative entropy between the
three-dimensional {\it one-particle} non markovian diffusions $Y_1$
and $X^{nlS}$.

\begin{lemma}\label{lemma_entropy}
Under hypothesis h1) and h2) we have
\begin{multline}\label{equation_drift} 
{\mathbb E}_{\mathbb P_N}[  \| b ^N_1(\hat Y_s) -  u^{nlS}(Y_1(s))\|^2 ]=\int{\frac{|\nabla\Psi_N|^2}{2}\text{d}\bold{r}_1...\text{d}\bold{r}_N}
-\mu_{nlS}+\\
+\int{V(\bold{r}_1)|\Psi_N|^2\text{d}\bold{r}_1...\text{d}\bold{r}_N}+2g\int{\phi_{nlS}^2(\bold{r}_1)|\Psi_N|^2\text{d}\bold{r}_1...\text{d}\bold{r}_N}.
\end{multline}
\end{lemma}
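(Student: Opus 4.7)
The plan is to exploit the stationarity of $\mathbb{P}_N$ with respect to the ground-state density $|\Psi_N|^2$ and reduce the path-space expectation at the fixed time $s$ to a deterministic integral. Since under $\mathbb{P}_N$ the random variable $\hat Y_s$ has law $|\Psi_N|^2\,d\mathbf{r}_1\cdots d\mathbf{r}_N$, and since under h1), h2) $\phi_{nlS}$ is strictly positive with $\rho_{nlS}=\phi_{nlS}^2$, we have $b^N_1 = \nabla_1\Psi_N/\Psi_N$ and $u^{nlS}(Y_1)=\nabla\phi_{nlS}(Y_1)/\phi_{nlS}(Y_1)$, so that
\begin{equation*}
\mathbb{E}_{\mathbb{P}_N}\bigl[\|b^N_1-u^{nlS}(Y_1)\|^2\bigr] = \int \Bigl\|\tfrac{\nabla_1\Psi_N(\mathbf{r})}{\Psi_N(\mathbf{r})} - \tfrac{\nabla\phi_{nlS}(\mathbf{r}_1)}{\phi_{nlS}(\mathbf{r}_1)}\Bigr\|^2 |\Psi_N(\mathbf{r})|^2\,d\mathbf{r}.
\end{equation*}

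I would then expand the square into the three pieces $\|b^N_1\|^2$, $-2\,b^N_1\cdot u^{nlS}$, $\|u^{nlS}\|^2$. The first yields $\int|\nabla_1\Psi_N|^2\,d\mathbf{r}$, which by the bosonic symmetry of $\Psi_N$ coincides with the full-gradient expression that appears in the statement. The third, $\int(|\nabla\phi_{nlS}|^2/\phi_{nlS}^2)(\mathbf{r}_1)|\Psi_N|^2\,d\mathbf{r}$, I would hold in reserve, expecting it to be cancelled by a contribution arising from the cross term.

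The main computation is the cross term. Writing $2\Psi_N\nabla_1\Psi_N=\nabla_1(|\Psi_N|^2)$ and integrating by parts in the $\mathbf{r}_1$ variable (boundary terms vanish thanks to $\Psi_N\in H^1(\mathbb{R}^{3N})$ and to the decay enforced by the confining potential from h1)), it becomes
\begin{equation*}
\int |\Psi_N|^2\,\Bigl[\tfrac{\Delta\phi_{nlS}}{\phi_{nlS}}(\mathbf{r}_1) - \tfrac{|\nabla\phi_{nlS}|^2}{\phi_{nlS}^2}(\mathbf{r}_1)\Bigr]d\mathbf{r}.
\end{equation*}
Substituting $\Delta\phi_{nlS}/\phi_{nlS} = V + 2g\phi_{nlS}^2 - \mu_{nlS}$ from the nlS/GP equation \eqref{GP} produces precisely $\int V(\mathbf{r}_1)|\Psi_N|^2\,d\mathbf{r}+2g\int\phi_{nlS}^2(\mathbf{r}_1)|\Psi_N|^2\,d\mathbf{r}-\mu_{nlS}$, together with a remainder $-\int(|\nabla\phi_{nlS}|^2/\phi_{nlS}^2)(\mathbf{r}_1)|\Psi_N|^2\,d\mathbf{r}$ that exactly cancels the $\|u^{nlS}\|^2$-piece held aside. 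Summing the three contributions gives the stated identity.

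The step requiring most care is the integration by parts. One must verify the absence of boundary contributions at infinity, which follows from $\Psi_N\in H^1$ together with the decay imposed by $V$, and also ensure that $\nabla\phi_{nlS}/\phi_{nlS}$ is locally integrable against $|\Psi_N|^2$. The latter is guaranteed by the strict positivity and $C^2$-smoothness of $\phi_{nlS}$ coming from h1), h2) and elliptic regularity applied to \eqref{GP}; the positivity of the bosonic ground state $\Psi_N$ similarly makes $\nabla_1\Psi_N/\Psi_N$ well-defined everywhere.
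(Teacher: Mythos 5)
Your algebraic core is the same as the paper's: by stationarity the expectation reduces to $\int\|\nabla_1\Psi_N/\Psi_N-\nabla\phi_{nlS}/\phi_{nlS}\|^2|\Psi_N|^2\,d\mathbf{r}_1\cdots d\mathbf{r}_N$, an integration by parts in $\mathbf{r}_1$ turns the cross term into $\Delta\phi_{nlS}/\phi_{nlS}$, and the stationary equation \eqref{GP} is substituted; the paper merely packages your three-term expansion as the single perfect square $\tfrac12\int|\nabla_1(\Psi_N/\phi_{nlS})|^2\phi_{nlS}^2$ before integrating by parts. The genuine gap is in the step you yourself flag as delicate. The paper does not integrate by parts on $\mathbb{R}^{3N}$ directly: it replaces $\Psi_N$ by the ground state $\Psi_{N,R}$ of the Hamiltonian restricted to the ball $B_R$ with Dirichlet boundary conditions, so that the boundary flux vanishes identically because $\Psi_{N,R}|_{\partial B_R}=0$, and only then lets $R\to\infty$, using the weak convergence of $|\Psi_{N,R}|^2$ to $|\Psi_N|^2$ and the convergence of the kinetic terms. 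Your justification (``boundary terms vanish thanks to $\Psi_N\in H^1$ and the decay enforced by the confining potential'') does not do the job: $H^1$ membership gives no pointwise control on large spheres, and the flux to be killed is of the form $\int_{\partial B_R}\frac{|\Psi_N|^2}{\phi_{nlS}}\,\partial_n\phi_{nlS}$, in which $1/\phi_{nlS}$ blows up at infinity and $|\nabla\phi_{nlS}/\phi_{nlS}|$ grows (roughly like $V^{1/2}$ for a confining potential), so one would need a quantitative comparison of the decay of $\Psi_N$ with that of $\phi_{nlS}$, which is not available under h1), h2).

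Related to this, the term you ``hold in reserve'', $\int\bigl(|\nabla\phi_{nlS}|^2/\phi_{nlS}^2\bigr)(\mathbf{r}_1)\,|\Psi_N|^2$, is not known a priori to be finite, so the decomposition into three separately integrable pieces and the exact cancellation you invoke both need justification; in the paper the finiteness of ${\mathbb E}_{\mathbb P_N}\|u^{nlS}(Y_1)\|^2$ (see \eqref{enii} in Remark \ref{remark_entropy}) is obtained as a consequence of the lemma through the $B_R$ approximation, not used as an input. The missing ingredient is therefore precisely an approximation or cutoff device of the kind the paper employs. Two minor points: with the paper's normalization of the drift a factor $1/2$ accompanies the kinetic term on the right-hand side, and the $\nabla$ appearing there is the one-particle gradient $\nabla_1$, so no appeal to bosonic symmetry is needed to identify your first term with it.
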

\begin{proof}
By simple computation and recalling that $\phi_{nlS}$ is strictly positive and $C^2$ and hypothesis h1) we have
$${\mathbb E}_{\mathbb P_N}[  \| b ^N_1(\hat Y_s) -  u^{nlS}(Y_1(s))\|^2 ]=\frac{1}{2}\int_{\mathbb R^n}{\left|\nabla_1\left( \frac{\Psi_N}{\phi_{nlS}} \right) \right|^2 \phi_{nlS}^2 d{\bf r_1}...d{\bf r_N}}$$
where $\Psi_N$ is the ground state of the $N$ body Hamiltonian \eqref{HN}. \\
We now want to prove that $\int_{\mathbb R^n}{\left|\nabla_1\left( \frac{\Psi_N}{\phi_{nlS}} \right) \right|^2 \phi_{nlS}^2 d{\bf r_1}...d{\bf r_N}}$ is finite and equal to the right hand side of equation \eqref{equation_drift}. In order to prove this, let $\Psi_{R,N}$ be the ground state of Hamiltonian \eqref{HN} restricted on the ball $B_R$, with radius $R$ and centre in $0$, with Dirichlet boundary condition. Using integration by parts, the fact that $\Psi_{N,R}|_{\partial B_R}=0$ and the nlS equation \eqref{GP} we obtain 
\begin{multline}
\frac{1}{2}\int_{B_R}{\left|\nabla_1\left( \frac{\Psi_{N,R}}{\phi_{nlS}} \right) \right|^2 \phi_{nlS}^2 d{\bf r_1}...d{\bf r_N}}\\
= \int_{B_R}{\left(\frac{\left|\nabla_1\Psi_{N,R}\right|^2}{2}-\nabla_1\left(\frac{|\Psi_{N,R}|^2}{\phi_{nlS}}\right)\cdot \nabla_1\phi_{nlS} \right)d{\bf r_1}...d{\bf r_N}}\\
=\int_{B_R}{\frac{|\nabla\Psi_{N,R}|^2}{2}\text{d}\bold{r}_1...\text{d}\bold{r}_N}+\int_{B_R}{V(\bold{r}_1)|\Psi_{N,R}|^2\text{d}\bold{r}_1...\text{d}\bold{r}_N}
+\\
-\mu_{nlS}+2g\int{\phi_{nlS}^2(\bold{r}_1)|\Psi_{N,R}|^2\text{d}\bold{r}_1...\text{d}\bold{r}_N}.
\end{multline}
Using the fact that $|\Psi_{N,R}|^2 \rightarrow |\Psi_N|^2$ weakly and that $\int_{B_R}{\frac{|\nabla\Psi_{N,R}|^2}{2}\text{d}\bold{r}_1...\text{d}\bold{r}_N} \rightarrow \int_{\mathbb{R}^{3N}}{\frac{|\nabla\Psi_{N}|^2}{2}\text{d}\bold{r}_1...\text{d}\bold{r}_N}$ as $R\rightarrow +\infty$ the lemma is proved.
\end{proof}

\begin{remark}\label{remark_entropy}
Lemma \ref{lemma_entropy} has two important consequences. First of all, since  $\Psi^0_N$ is the minimizer of
$E^N[\Psi]$, the following finite energy conditions hold:
\begin{equation}\label {eni}
{\mathbb E}_{\mathbb P_N} \int_0^t \|\hat b ^N_s \|  ^2 ds  < \infty
\end{equation}
\begin{equation}\label{enii}
{\mathbb E}_{\mathbb P_N} \int_0^t\| \ \hat u^{nlS}_s  \| ^2 ds   < \infty .
\end{equation}
Furthermore, by Theorem \ref{theorem2}, we obtain 
$${\mathbb E}_{\mathbb P_N}[  \| b ^N_1(\hat Y_s) -  u^{nlS}(Y_1(s))\|^2 ]\rightarrow 0$$
as $N \rightarrow \infty$.
\end{remark}

\begin{lemma}\label{lemma2}

 The one-particle (or normalized)  relative entropy is given by

 \begin{equation}
 \bar{\mathcal H} (\mathbb P_N, \mathbb P^N_{nlS})|_{\mathcal F_t}
 =\frac 1 2 {\mathbb E}_{\mathbb P_N}\int_0^{t}  \| b ^N_1(\hat Y_s) -  u^{nlS}(Y_1(s))\|^2 d s
 \end{equation}

\end{lemma}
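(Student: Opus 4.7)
The plan is to apply Girsanov's theorem to identify the Radon-Nikodym derivative of $\mathbb P_N$ with respect to $\mathbb P^N_{nlS}$ on $\mathcal F^N_t$, exploit the finite energy conditions in Remark \ref{remark_entropy} to compute the expectation of its logarithm, and then invoke the bosonic permutation symmetry of $\Psi^0_N$ to reduce the resulting $N$-particle sum to a single-particle contribution. The bar in $\bar{\mathcal H}$ will correspond exactly to the factor $1/N$ produced by this symmetry reduction.

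First I would compare the SDEs \eqref{SDEa} and \eqref{SDEb}. Both processes start from the same initial law with density $\rho_N$ and are driven by $3N$-dimensional Brownian motions, differing only through their drifts $\hat b^N$ and $\hat u^{nlS}$. Conditions \eqref{eni}, \eqref{enii} (together with $L^2$ integrability of the drift difference guaranteed by Lemma \ref{lemma_entropy}) allow application of Girsanov's theorem, giving
$$\frac{d\mathbb P_N}{d\mathbb P^N_{nlS}}\bigg|_{\mathcal F^N_t} = \exp\left( \int_0^t (\hat b^N_s - \hat u^{nlS}_s) \cdot d\hat W'_s - \frac{1}{2}\int_0^t \|\hat b^N_s - \hat u^{nlS}_s\|^2\, ds\right).$$
Since under $\mathbb P_N$ one has $d\hat W'_s = d\hat W_s + (\hat b^N_s - \hat u^{nlS}_s)\, ds$, substituting this in the stochastic integral and then taking expectation under $\mathbb P_N$ kills the resulting $\mathbb P_N$-martingale term, producing the full $3N$-dimensional relative entropy
$$\mathcal H(\mathbb P_N, \mathbb P^N_{nlS})\big|_{\mathcal F^N_t} = \frac{1}{2}\mathbb E_{\mathbb P_N}\!\int_0^t \|\hat b^N_s - \hat u^{nlS}_s\|^2\, ds.$$

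Second, I would exploit the bosonic symmetry of the ground state. Because $\Psi^0_N$ is invariant under permutations of its arguments, so is $\hat b^N = \nabla^{(N)}\Psi^0_N/\Psi^0_N$, in the sense that the law of $(b^N_i(\hat Y_s), Y_i(s))$ under $\mathbb P_N$ is independent of $i$. Since $\hat u^{nlS}$ is a symmetric product drift, expanding $\|\hat b^N_s - \hat u^{nlS}_s\|^2 = \sum_{i=1}^N \|b^N_i(\hat Y_s) - u^{nlS}(Y_i(s))\|^2$ and using symmetry yields
$$\mathbb E_{\mathbb P_N}\!\int_0^t \|\hat b^N_s - \hat u^{nlS}_s\|^2\, ds = N\, \mathbb E_{\mathbb P_N}\!\int_0^t \|b^N_1(\hat Y_s) - u^{nlS}(Y_1(s))\|^2\, ds.$$
Interpreting $\bar{\mathcal H}$ as the \emph{normalized} relative entropy $\frac{1}{N}\mathcal H$ then produces exactly the claimed formula.

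The main obstacle is the rigorous application of Girsanov's theorem, because the drifts $\hat b^N = \nabla^{(N)}\Psi^0_N/\Psi^0_N$ are unbounded and need not satisfy Novikov's condition in its strong form. I would handle this by a standard localization argument, using a sequence of stopping times exiting compact regions where $\Psi^0_N$ is bounded away from zero (strict positivity of the ground state ensured by h1)-h2)); the $L^2$ finite-energy bounds \eqref{eni}, \eqref{enii} and Lemma \ref{lemma_entropy} then let one pass to the limit and conclude that the Girsanov exponential is a true $\mathbb P^N_{nlS}$-martingale, so that the stochastic integral appearing after the change of measure is a true $\mathbb P_N$-martingale with vanishing expectation. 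This follows the approach already implemented in \cite{MU} in the Gross--Pitaevskii setting, transposed here to the generic mean-field regime.
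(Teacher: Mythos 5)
Your proposal is correct and follows essentially the same route as the paper: Girsanov's theorem (justified via the finite-energy conditions \eqref{eni}, \eqref{enii}, for which the paper invokes F\"ollmer's finite-entropy criterion and the sketch in \cite{MU}, while you add an explicit localization argument), taking the $\mathbb P_N$-expectation of the log-density so the martingale term vanishes, and then using the bosonic symmetry of $\hat b^N$ and $\rho_N$ to reduce the $N$-term sum to $N$ times the one-particle contribution, absorbed by the $1/N$ normalization defining $\bar{\mathcal H}$. The only discrepancies are inessential: your sign convention for the stochastic integral differs from the paper's displayed formula (irrelevant after taking expectations), and you correctly avoid the spurious extra factor $t$ that appears in the paper's intermediate display.
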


\begin{proof}
The proof is similar to the one proposed for GP limit in \cite{MU}. For this reason we report here only a sketch.

 \noindent  The inequalities \eqref{eni} and \eqref{enii} are  \textit{finite entropy conditions} (see, e.g.
\cite{Follmer}) which imply that $\forall t > 0$

$$\mathbb P_N|_{\mathcal F_t} \ll \hat W|_{\mathcal F_t}, \quad \mathbb P^N_{nlS}|_{\mathcal F_t}\ll \hat W'|_{\mathcal F_t}  $$
(where $\ll$ stands for absolute continuity)
 By Girsanov's theorem, we have, for all $t>0$,
\begin{equation}\label{derivative}
\left. \frac {d\mathbb P_N}{d\mathbb P^N_{nlS}}\right|_{\mathcal F_t}=
  \exp \left\{-\int_0^{t}  (\hat b ^N _s-
 \hat u^{nlS}_s)\cdot
 d \hat W_s+\frac{1}{2} \int_0^ {t}  \|\hat b ^N_s - \hat u^{nlS}_s\|^2ds
\right\},
 \end{equation}

\noindent  where $|.|$ denotes the Euclidean norm in $\R^{3N}$.
The relative entropy restricted to $\mathcal F_t$ reads

 \begin{multline} \label{eq: Entropy}
{\mathcal H}(\mathbb P_N,\mathbb P^N_{nlS}) |_{\mathcal F_{t}}
 =: {\mathbb E}_{\mathbb P_N}\left[
 \log\left. \frac {d\mathbb P_N}{d\mathbb P^N_{nlS}}\right|_{\mathcal F_t}\right]=
 \frac {1}{2}{\mathbb E}_{\mathbb P_N}\int_0^{t}  \|\hat b ^N_s - \hat u^{nlS}_s\|^2 ds
 \end{multline}

 Since under $\mathbb P_N$ the $3N$-dimensional process $\hat Y$ is a solution of \eqref {SDEa}
 with invariant probability density $ \rho_N$ , we can write, recalling also \eqref{eni} and \eqref{enii},
and by using  the symmetry of $\hat b^N $ and $\rho_N$

 \begin{multline}
  \mathcal H(\mathbb P_N,\mathbb P^N_{nlS}) |_{\mathcal F_{t}} =\\
 = \frac 1 2 t
  \int _{\R^{3N}} \sum _{i=1}^{N}\| b ^N_i({\bf r}_1,\dots,{\bf r}_N) -  u_{nlS} ({\bf r}_i)\|^2 \rho_N d {\bf r}_1 \dots d {\bf r}_N =\\
  =
 \frac 1 2 N t\int _{\R^{3N}} \| b ^N_1({\bf r}_1,\dots,{\bf r}_N) -  u_{nlS} ({\bf r}_1)\|^2 \rho_N d {\bf r}_1 \dots d {\bf r}_N =\\
 =\frac 1 2 Nt {\mathbb E}_{\mathbb P_N}\int_0^{t}  \| b ^N_1 (\hat Y_s) -  u_{nlS}(Y_1(s))\|^2 d
s,
 \end{multline}

 By defining the one-particle relative entropy as the {\it normalized relative entropy} we obtain

 \begin{multline}\label{oneparticlerelativentropy}
 \bar{\mathcal H} (\mathbb P_N, \mathbb P^N_{nlS})|_{\mathcal F_t}=:
 \frac 1 N  \mathcal H(\mathbb P_N,\mathbb P^N_{nlS}) |_{\mathcal F_{t}} = \\
 =\frac 1 2 {\mathbb E}_{\mathbb P_N}\int_0^{t}  \| b ^N_1(\hat Y_s) -  u^{nlS}(Y_1(s))\|^2 d s
 \end{multline}

\end{proof}

By  Theorem \ref{theorem2} we deduce that for any $ t
> 0$ the one particle relative entropy converges to zero in the scaling
limit.

\begin{remark}
In the more complicated GP scaling limit the one-particle relative entropy does not converge to zero but to a finite constant. This fact has important consequences for the convergence of the one-particle probability measure. See \cite{DeVU} for the proof of the existence of the limit probability measure, \cite{ADU} for the proof of weak convergence of the one-particle process and \cite{MU1} for the localization phenomenon of the relative entropy.
\end{remark}

The proof  the theorem takes advantage of the following lemma, which represents a useful chain-rule of the relative entropy when the reference measure is a {\it product one}.

\begin{lemma}\label{lemma3} We consider $M=X\times Y$, where $X$ and $Y$ are
Polish spaces. Let ${\mathbb P}$ be a measure on $M$ and ${\mathbb
Q}_1$ and ${\mathbb Q}_2$ probability measures on $X$ and $Y$ respectively. We
denote by $\mathbb Q={\mathbb Q}_1\otimes {\mathbb Q}_2$ the
product measure on $M$ of the measures ${\mathbb Q}_1$ and
${\mathbb Q}_2$ and we suppose that ${\mathbb P} \ll {\mathbb Q}$.
Then we have
\begin{equation}
{\mathcal H}({\mathbb P}|{\mathbb Q})\geq {\mathcal H}({\mathbb
P}_1|{\mathbb Q}_1) + {\mathcal H}({\mathbb P}_2|{\mathbb Q}_2),
\end{equation}
\noindent where ${\mathbb P}_1$ and ${\mathbb P}_2$ are the
marginal probabilities of ${\mathbb P}$.
\end{lemma}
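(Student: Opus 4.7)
The plan is to reduce the inequality to the non-negativity of the relative entropy $\mathcal{H}(\mathbb{P}|\mathbb{P}_1 \otimes \mathbb{P}_2)$ (i.e. the mutual information of the two marginals of $\mathbb{P}$), which is a direct consequence of Jensen's inequality applied to $t \mapsto t \log t$. Everything else should be a matter of bookkeeping with Radon-Nikodym derivatives.

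First I would observe that the marginals $\mathbb{P}_i$ are absolutely continuous with respect to $\mathbb{Q}_i$: if $\mathbb{Q}_1(A)=0$, then $\mathbb{Q}(A\times Y)=\mathbb{Q}_1(A)\mathbb{Q}_2(Y)=0$, whence $\mathbb{P}_1(A)=\mathbb{P}(A\times Y)=0$, and symmetrically for the other marginal. Therefore the densities $f=d\mathbb{P}/d\mathbb{Q}$, $f_1=d\mathbb{P}_1/d\mathbb{Q}_1$, $f_2=d\mathbb{P}_2/d\mathbb{Q}_2$ are well defined, and Fubini's theorem (together with the product structure of $\mathbb{Q}$) identifies them as conditional expectations of $f$:
\begin{equation*}
f_1(x)=\int_Y f(x,y)\,d\mathbb{Q}_2(y),\qquad f_2(y)=\int_X f(x,y)\,d\mathbb{Q}_1(x).
\end{equation*}

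The next key observation is that the density of $\mathbb{P}_1\otimes\mathbb{P}_2$ with respect to $\mathbb{Q}=\mathbb{Q}_1\otimes\mathbb{Q}_2$ is exactly $(x,y)\mapsto f_1(x)f_2(y)$. Using this together with the defining relations $\int f_i\log f_i\,d\mathbb{Q}_i=\int f\,\log f_i\,d\mathbb{Q}$ (for $i=1,2$), I would rewrite the right-hand side of the claimed inequality as
\begin{equation*}
\mathcal{H}(\mathbb{P}_1|\mathbb{Q}_1)+\mathcal{H}(\mathbb{P}_2|\mathbb{Q}_2)=\int f\,\log(f_1 f_2)\,d\mathbb{Q},
\end{equation*}
so that the inequality to be proved becomes
\begin{equation*}
\int f\,\log\!\left(\frac{f}{f_1 f_2}\right)d\mathbb{Q}\;\ge\;0.
\end{equation*}

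Finally, I would recognize the left-hand side as $\mathcal{H}(\mathbb{P}|\mathbb{P}_1\otimes\mathbb{P}_2)$, which is non-negative by Jensen's inequality applied to the convex function $\varphi(t)=t\log t$ (or, equivalently, by using $\log u\le u-1$ on $u=(f_1 f_2)/f$). This gives exactly
\begin{equation*}
\mathcal{H}(\mathbb{P}|\mathbb{Q})-\mathcal{H}(\mathbb{P}_1|\mathbb{Q}_1)-\mathcal{H}(\mathbb{P}_2|\mathbb{Q}_2)=\mathcal{H}(\mathbb{P}|\mathbb{P}_1\otimes\mathbb{P}_2)\ge 0,
\end{equation*}
which is the required inequality. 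The only subtle point is handling the set $\{f_1 f_2=0\}$, but on this set $\mathbb{P}(\{f=0\})$-a.e.\ computations are well defined with the convention $0\log 0=0$, so no real difficulty arises. I do not anticipate a genuine obstacle; the proof is essentially the standard data-processing argument rewritten in Radon-Nikodym language.
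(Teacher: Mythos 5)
Your argument is correct, but it is worth noting that the paper itself does not prove Lemma \ref{lemma3} at all: it simply cites Lemma 5.1 of \cite{DeVU}. What you have written out is precisely the standard argument behind that cited result, namely the exact decomposition
\begin{equation*}
{\mathcal H}({\mathbb P}|{\mathbb Q})={\mathcal H}({\mathbb P}|{\mathbb P}_1\otimes{\mathbb P}_2)+{\mathcal H}({\mathbb P}_1|{\mathbb Q}_1)+{\mathcal H}({\mathbb P}_2|{\mathbb Q}_2),
\end{equation*}
with the first term (the mutual information) nonnegative by Jensen; so your route is a self-contained replacement for the citation rather than a different method. Two small points deserve a cleaner treatment than your closing remark. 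First, the splitting of $\int f\log f\,d{\mathbb Q}$ into $\int f\log\bigl(f/(f_1f_2)\bigr)d{\mathbb Q}+\int f\log(f_1f_2)\,d{\mathbb Q}$ requires a justification when ${\mathcal H}({\mathbb P}|{\mathbb Q})=+\infty$ or when individual pieces could a priori be ill-defined: use the pointwise bound $a\log(a/b)\ge a-b$ (for $a,b\ge 0$) to see that each of the three entropy integrands has integrable negative part, so each integral is well defined in $(-\infty,+\infty]$ and the additivity is legitimate. Second, the set $\{f_1f_2=0\}$ is harmless for a more precise reason than the convention $0\log 0=0$: if $f_1(x)=0$ then $\int_Y f(x,y)\,d{\mathbb Q}_2(y)=0$, so $f(x,\cdot)=0$ for ${\mathbb Q}_2$-a.e.\ $y$; hence $f=0$ ${\mathbb Q}$-a.e.\ on $\{f_1f_2=0\}$, i.e.\ ${\mathbb P}\ll{\mathbb P}_1\otimes{\mathbb P}_2$ with density $f/(f_1f_2)$, which is what makes the identification of the first term as a relative entropy correct. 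With these two points made explicit, your proof is complete and, being elementary and self-contained, arguably preferable to an external citation.
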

\begin{proof}
The proof can be found in Lemma 5.1 of \cite{DeVU}. 
\end{proof}

\begin{theorem}[Total variation convergence on the path space]\label{theorem3}Under the same hypothesis
h1),h2) as in Theorem \ref{theorem1}, the one-particle measure  ${\mathbb{P}}_N^1$ converges in total variation to
$\mathbb{P}_{nlS}$, the latter being uniquely associate to the non linear Schroedinger functional.
\end{theorem}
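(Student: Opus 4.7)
The plan is a three-step sandwich: Pinsker's inequality reduces total variation to marginal relative entropy, an iterated chain rule (Lemma~\ref{lemma3}) combined with exchangeability bounds the marginal relative entropy by the normalized $N$-particle relative entropy, and this last quantity is driven to zero by Lemma~\ref{lemma2}, Lemma~\ref{lemma_entropy} and Theorem~\ref{theorem2} (as already noted in Remark~\ref{remark_entropy}).

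Fix any $t>0$ and restrict every measure to $\mathcal F_t$. By the Csisz\'ar--Kullback--Pinsker inequality,
\begin{equation*}
\|\mathbb P_N^1 - \mathbb P_{nlS}\|_{TV}^2 \;\le\; 2\,\mathcal H(\mathbb P_N^1\,|\,\mathbb P_{nlS})\big|_{\mathcal F_t}.
\end{equation*}
Since $\mathbb P^N_{nlS}=\mathbb P_{nlS}^{\otimes N}$ is a product measure and $\mathbb P_N$ is exchangeable---the Bose symmetry of $\Psi^0_N$ makes both $\rho_N$ and the drift $b^N$ invariant under permutations of particles---iterating Lemma~\ref{lemma3} by peeling off one coordinate at a time gives
\begin{equation*}
\mathcal H(\mathbb P_N\,|\,\mathbb P^N_{nlS})\big|_{\mathcal F_t} \;\ge\; \sum_{k=1}^N \mathcal H(\mathbb P_N^k\,|\,\mathbb P_{nlS})\big|_{\mathcal F_t} \;=\; N\,\mathcal H(\mathbb P_N^1\,|\,\mathbb P_{nlS})\big|_{\mathcal F_t},
\end{equation*}
where the last equality uses exchangeability. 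Dividing by $N$ and inserting the entropy identity of Lemma~\ref{lemma2} we obtain
\begin{equation*}
\mathcal H(\mathbb P_N^1\,|\,\mathbb P_{nlS})\big|_{\mathcal F_t} \;\le\; \bar{\mathcal H}(\mathbb P_N,\mathbb P^N_{nlS})\big|_{\mathcal F_t} \;=\; \tfrac{1}{2}\,\mathbb E_{\mathbb P_N}\!\!\int_0^t \|b^N_1(\hat Y_s)-u^{nlS}(Y_1(s))\|^2\,ds.
\end{equation*}

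The Nelson diffusion associated to $\rho_N$ is stationary under $\mathbb P_N$, so the integrand is independent of $s$ and equals the quantity computed in Lemma~\ref{lemma_entropy}. Remark~\ref{remark_entropy}, which invokes Theorem~\ref{theorem2} (convergence of the single-particle kinetic, external-potential and interaction-energy components together with the algebraic identity $\mu_{nlS}=E_{nlS}+g\int|\phi_{nlS}|^4$), then forces this expectation to tend to $0$ as $N\uparrow\infty$. Hence for every fixed $t>0$ the right-hand side is bounded by $t$ times a vanishing sequence, and chaining the three displayed inequalities yields $\|\mathbb P_N^1-\mathbb P_{nlS}\|_{TV}\big|_{\mathcal F_t}\to 0$, which is the claim.

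The only delicate point in the plan is the iterated use of Lemma~\ref{lemma3}: one must verify at each step that the remaining marginal of $\mathbb P_N$ is still exchangeable and that the reference side keeps its tensor-product form. Both facts are immediate here---the first from the Bose symmetry of $\Psi^0_N$ (which passes to every sub-marginal) and the second from the factorization of $\mathbb P_{nlS}^{\otimes N}$ in every splitting. All the remaining ingredients---Pinsker's inequality, the Girsanov-based entropy formula of Lemma~\ref{lemma2}, and the energy convergence of Theorem~\ref{theorem2}---are already available, so no further work is needed beyond assembling them in the order above.
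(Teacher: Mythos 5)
Your proposal is correct and follows essentially the same route as the paper: the Csisz\'ar--Kullback (Pinsker) inequality, the iterated chain-rule of Lemma~\ref{lemma3} together with exchangeability to bound $\mathcal H(\mathbb P^1_N|\mathbb P_{nlS})$ by the normalized entropy $\bar{\mathcal H}(\mathbb P_N,\mathbb P^N_{nlS})$, and the Girsanov identity of Lemma~\ref{lemma2} combined with Remark~\ref{remark_entropy} (i.e.\ Theorem~\ref{theorem2}) to send that quantity to zero. Your explicit use of stationarity and symmetry matches the paper's argument, so nothing further is needed.
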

\begin{proof}
By Lemma \ref{lemma2} the one-particle relative entropy reads
 \begin{equation}
 \bar{\mathcal H} (\mathbb P_N, \mathbb P^N_{nlS})|_{\mathcal F_t}
 =\frac 1 2 {\mathbb E}_{\mathbb P_N}\int_0^{t}  \| b ^N_1(\hat Y_s) -  u^{nlS}(Y_1(s))\|^2 d s
 \end{equation}
and by Remark \ref{remark_entropy} we obtain 
\begin{equation}\label{equation_limit}
\lim_{N\uparrow \infty}\bar{\mathcal H} (\mathbb P_N, \mathbb P^N_{nlS})|_{\mathcal F_t}=\frac t 2\lim_{N\uparrow \infty}  {\mathbb E}_{\mathbb P_N} [\| b ^N_1(\hat Y_s) -  u^{nlS}(Y_1(s))\|^2]=0
\end{equation}

Let us introduce the total variation distance between the one-particle measure ${\mathbb{P}}_N^1$ and $\mathbb{P}_{nlS}$:
\begin{equation}
d_{TV}({\mathbb P}^1_N,{\mathbb{P}}_{nlS}))|_{\mathcal F_{t}}\:=\sup_{A\in
{\mathcal F}_{t}} |{{\mathbb{P}}}^1_N(A)-{\mathbb{P}}_{nlS}(A)| \\
=\sup_{A\in {\mathcal F}_t}\left|\int_A
\left(\frac{d{\mathbb{P}}^1_N}{d\mathbb{P}_{nlS}}-1\right){d\mathbb{P}_{nlS}}\right|
\end{equation}

By the well-known Csiszar-Kullback inequality
(\cite{Csiszar},\cite{Kullback}), which is valid in arbitrary Polish spaces,

\begin{equation}\label{CKinequality}
d_{TV}({\mathbb P}^1_N,{\mathbb{P}}_{nlS})|_{\mathcal F_{t}}\le \sqrt{2 \mathcal H(\mathbb P^1_N,\mathbb P_{nlS}) |_{\mathcal F_{t}}}.
\end{equation}

By using Lemma \ref{lemma3} we can write

\begin{equation}
{\mathcal H}({\mathbb P^N}|{\mathbb P^N_{nlS}})\geq {\mathcal H}({\mathbb
P}^1_N|{\mathbb P}^1_{nlS}) + {\mathcal H}({\mathbb P}^{N-1}_N|{\mathbb P}^{N-1}_{nlS}),
\end{equation}
and by repeating this procedure we obtain

\begin{equation}\label{onerelativeentropy}
\mathcal H(\mathbb P^1_N,\mathbb P_{nlS}) |_{\mathcal F_{t}}\le \bar{\mathcal H} (\mathbb P_N, \mathbb P^N_{nlS})|_{\mathcal F_t}.
\end{equation}
From equation \eqref{equation_limit} and inequalities \eqref{onerelativeentropy}, \eqref{CKinequality} we deduce that the
sequence ${\mathbb{P}}_N^1$ converges in total variation to
$\mathbb{P}_{nlS}$.

\end{proof}

\section{Strong Kac's chaos on path space}\label{section_kac_chaos}

It is well known that the (usual)  Kac's chaos is implied by statements of the kind given in the following theorem:

\begin{theorem}[Kac's chaos on path space]\label{theorem4}
$\forall T >0$ fixed,$ \forall i=1,2,...,N$ we have
\begin{equation}
\lim_{N\uparrow +\infty}{E}_{{\mathbb P}_N}[\sup_{t\le T}|Y^1_t-Y_t|^2]=0
\end{equation}
\end{theorem}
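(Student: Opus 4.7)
The plan is to construct an explicit pathwise coupling between the first coordinate $Y^1$ of the interacting Nelson diffusion under $\mathbb{P}_N$ and a copy $Y$ of the one-particle limit process, driven by a common Brownian motion, and then to close the estimate with a Gronwall argument fed by the $L^2$-drift convergence already established in Lemma~\ref{lemma_entropy} and Remark~\ref{remark_entropy}. Concretely, on the stochastic basis supporting $\mathbb{P}_N$ and the $3N$-dimensional Brownian motion $\hat W$ driving \eqref{SDEa}, let $W^1$ denote its first three-dimensional block and define $Y_t$ as the strong solution of
$$dY_t = u_{nlS}(Y_t)\,dt + dW^1_t,\qquad Y_0 = Y^1_0,$$
which is well-posed because $u_{nlS}\in C^1$ (as $\phi_{nlS}\in C^2$ is strictly positive under h1)--h2)). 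By construction the $\mathbb{P}_N$-law of $Y$ coincides with $\mathbb{P}_{nlS}$.

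Subtracting this equation from the first three-dimensional block of \eqref{SDEa}, and applying Cauchy--Schwarz together with $(a+b)^2\le 2a^2+2b^2$, one obtains for every $t\in[0,T]$
$$\sup_{s\le t}|Y^1_s-Y_s|^2 \le 2T\!\int_0^t|b^N_1(\hat Y_s)-u_{nlS}(Y^1_s)|^2\,ds + 2T\!\int_0^t|u_{nlS}(Y^1_s)-u_{nlS}(Y_s)|^2\,ds.$$
The first integrand is precisely the quantity estimated in Lemma~\ref{lemma_entropy}: by stationarity of $\hat Y$ under $\mathbb{P}_N$ its expectation $\varepsilon_N$ is time independent, and by Remark~\ref{remark_entropy} (which uses Theorem~\ref{theorem2}) it satisfies $\varepsilon_N\to 0$. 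Feeding a Lipschitz bound $|u_{nlS}(x)-u_{nlS}(y)|\le L|x-y|$ into the second integrand, taking expectations, and applying Gronwall's lemma yields
$$\mathbb{E}_{\mathbb{P}_N}\big[\sup_{s\le t}|Y^1_s-Y_s|^2\big] \le 2T^2\varepsilon_N + 2TL^2\!\int_0^t \mathbb{E}_{\mathbb{P}_N}\big[\sup_{r\le s}|Y^1_r-Y_r|^2\big]\,ds,$$
so $\mathbb{E}_{\mathbb{P}_N}[\sup_{t\le T}|Y^1_t-Y_t|^2]\le 2T^2\varepsilon_N\,e^{2T^2L^2}\to 0$. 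The analogous estimate for an arbitrary index $i$ follows from the permutation symmetry of $\rho_N$.

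The main obstacle is that under the bare hypotheses h1)--h2) the drift $u_{nlS}=\nabla\log\phi_{nlS}$ is smooth and strictly defined, but not \emph{a priori} globally Lipschitz on $\mathbb{R}^3$. I would resolve this through a localisation: replace $u_{nlS}$ by a globally Lipschitz modification $u_{nlS}^R$ that coincides with $u_{nlS}$ on the ball $B_R$, run the Gronwall step for the truncated pair, and control the remainder on the exit event $\{\sup_{t\le T}(|Y^1_t|\vee|Y_t|)>R\}$ by means of uniform moment bounds on $\rho_N$ inherited from the confining behaviour of $V$ in h1) and from the finite-energy estimate \eqref{eni}. Sending $N\to\infty$ first and $R\to\infty$ afterwards closes the argument.
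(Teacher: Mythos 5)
Your route is genuinely different from the paper's, and it is precisely the extra structure you introduce that creates the gap. You insist that the comparison process $Y$ solve the autonomous limit equation $dY_t=u_{nlS}(Y_t)\,dt+dW^1_t$, which forces you to split the drift error as $b^N_1(\hat Y_s)-u_{nlS}(Y^1_s)$ plus $u_{nlS}(Y^1_s)-u_{nlS}(Y_s)$ and to run a Gronwall argument; this needs a global Lipschitz bound for $u_{nlS}=\tfrac12\nabla\rho_{nlS}/\rho_{nlS}$, which, as you acknowledge, is not available under h1)--h2). The localisation you sketch does not close this hole: to discard the exit event $\{\sup_{t\le T}(|Y^1_t|\vee|Y_t|)>R\}$ \emph{uniformly in $N$} (which is what the order of limits ``first $N\to\infty$, then $R\to\infty$'' requires, since the Lipschitz constant $L_R$ of the truncated drift blows up with $R$ and the Gronwall factor is $e^{cT^2L_R^2}$) you would need either uniform-in-$N$ moments of order strictly higher than two for $\sup_{t\le T}|Y^1_t-Y_t|$ — Cauchy--Schwarz on ${\mathbb E}_{\mathbb P_N}[\mathbf{1}_{A_R}\sup_t|Y^1_t-Y_t|^2]$ asks for fourth moments of $\int_0^T|b^N_1(\hat Y_s)|\,ds$, whereas only ${\mathbb E}_{\mathbb P_N}[\,|b^N_1|^2\,]$ is controlled by Theorem \ref{theorem2} — or a uniform integrability statement for $|b^N_1|^2$ and $|u_{nlS}|^2$ under the laws $\rho_N$, which is neither stated nor obviously available; moreover h1) prescribes no growth rate for $V$, so ``uniform moment bounds on $\rho_N$'' of the needed order are not automatic. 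Two smaller inaccuracies: with $Y_0=Y^1_0\sim\rho^{(1)}_N$ the law of your $Y$ is the nlS dynamics started from $\rho^{(1)}_N$, not $\mathbb P_{nlS}$ itself (this matches the convention of \eqref{SDEb}, but not your sentence), and strong well-posedness of the limit SDE needs a non-explosion argument beyond $u_{nlS}\in C^1$.

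The paper's proof avoids all of this machinery. It never compares $u_{nlS}(Y^1_s)$ with $u_{nlS}(Y_s)$: it bounds $\sup_{t\le T}|Y^1_t-Y_t|^2$ directly by $\bigl(\int_0^T|b^N_1(\hat Y_s)-u_{nlS}(Y^1_s)|\,ds\bigr)^2$, i.e.\ the comparison drift is evaluated along the interacting particle's own path, and then applies Jensen's inequality and the stationarity of $\hat Y$ under $\mathbb P_N$ to obtain the bound $T^2\,{\mathbb E}_{\mathbb P_N}[\,\|b^N_1(\hat Y_s)-u_{nlS}(Y^1_s)\|^2\,]$, which tends to $0$ by Lemma \ref{lemma_entropy} and Remark \ref{remark_entropy}; no Lipschitz property, Gronwall lemma or truncation enters. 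Your coupling does aim at the stronger and more natural comparison with the autonomous limit diffusion, but under the hypotheses of the paper you cannot complete it as written: the missing uniform-in-$N$ estimates in the localisation step are a genuine gap. Either supply those estimates (essentially, uniform integrability of the squared drifts or higher moments of the one-particle process uniform in $N$), or adopt the paper's choice of comparison process and delete the Gronwall/localisation part altogether.
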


\begin{proof} Let $T$ be a fixed time.
For all $t\le T$, by exploiting \eqref{SDEa} and \eqref{SDEb}, with $Y^1_t$ and $Y_t$ the first components of $\hat Y_t$ in \eqref{SDEa} and \eqref{SDEb} respectively, we have
\begin{equation}
\sup_{t\le T}|Y^1_t-Y_t|^2\le \left|\int_0^T(b_1^N(\hat{Y}_s)-u_{nlS}(Y^1_s))ds\right|^2
\end{equation}
and by taking the expectation with respect to the measure $\mathbb{P}_N$ we obtain
\begin{multline}
\mathbb{E}_{\mathbb P_N}[\sup_{t\le T}|Y^1_t-Y_t|^2]\le \mathbb{E}_{\mathbb P_N}\left[ \left|\int_0^T(b_1^N(\hat{Y}_s)-u_{nlS}(Y^1_s))ds\right|^2\right]=\\
=\mathbb{E}_{\mathbb P_N}\left[T^2\left(\frac{1}{T} \int_0^T(b_1^N(\hat{Y}_s)-u_{nlS}(Y^1_s))ds\right)^2\right]\le\\ 
\le\mathbb{E}_{\mathbb P_N}\left[T^2 \frac{1}{T} \int_0^T(b_1^N(\hat{Y}_s)-u_{nlS}(Y^1_s))^2ds\right]=\\=T^2 \mathbb{E}_{\mathbb P_N}[(b_1^N(\hat{Y}_s)-u_{nlS}(Y^1_s))^2)] 
\end{multline}
where we have applied Jensen's inequality and we have taken into account the stationarity of both our systems.
By Remark \ref{remark_entropy} we deduce the statement of the theorem.
\end{proof}

\begin{remark}
The Kac's chaos on path space implies the Kac's chaos for the fixed time marginal probability densities, that is for any fixed $k \in {\mathbb N}$, the measures $\rho^k_N({\bf r}_1,\dots,{\bf r}_N)d{\bf r}_1\dots d{\bf r}_N$ converges weakly to $\rho^{\otimes k}({\bf r}) d{\bf r}$.  The latter can be derived by the complete Bose-Einstein Condensation (see \cite{Ugolini}).
\end{remark} 
Let us denote by $ {{\mathbb P}^k_N}$ the measure $ {{\mathbb P}_N}$ restricted with respect to the $\sigma$-algebra generated by (any)  $k$ particles (due to the symmetry).

We prove a result which states that when the normalized relative entropy goes to zero, also the relative entropy between any $k$ marginal probability measures goes to zero.

\begin{lemma}\label{lemma4}
If for all fixed $t >0$, when ${N\uparrow +\infty}$
 \begin{equation}\label{normrelativeentropy}
 \bar{\mathcal H} (\mathbb P_N, \mathbb P^N_{nlS})|_{\mathcal F_t} \rightarrow  0
 \end{equation}
 then, for every fixed $k\in {\mathbb N}$,
\begin{equation}
\lim_{N\uparrow +\infty}{\mathcal H} (\mathbb P^k_N, \mathbb P^k_{nlS}))|_{\mathcal F_t}=0
\end{equation}

\end{lemma}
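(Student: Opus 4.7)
The plan is to exploit subadditivity of the relative entropy with respect to a product reference measure (Lemma \ref{lemma3}), combined with the permutation symmetry of $\mathbb P_N$. Since $\mathbb P^N_{nlS}$ factorises as an $N$-fold product of the one-particle reference measure on path space, applying Lemma \ref{lemma3} to disjoint blocks of $k$ particles will give an almost additive lower bound on the total relative entropy. Once divided by $N$, this will transfer the smallness of the normalised relative entropy directly to the $k$-particle relative entropy.

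More precisely, I would write $N=qk+r$ with $q=\lfloor N/k\rfloor$ and $0\le r<k$, and partition $\{1,\dots,N\}$ into $q$ disjoint blocks of size $k$ plus a residual block of size $r$. Since $\mathbb P^N_{nlS}$ is a product over these blocks, applying Lemma \ref{lemma3} iteratively (grouping two blocks at a time) yields
\begin{equation}
\mathcal H(\mathbb P_N,\mathbb P^N_{nlS})|_{\mathcal F_t}\;\ge\; q\,\mathcal H(\mathbb P^k_N,\mathbb P^k_{nlS})|_{\mathcal F_t},
\end{equation}
where I have used the permutation symmetry of $\mathbb P_N$ to identify the marginal on every block of size $k$ with $\mathbb P^k_N$, and I have discarded the non-negative contribution coming from the residual block of size $r$.

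Dividing by $q$ and recalling that $\bar{\mathcal H}(\mathbb P_N,\mathbb P^N_{nlS})=N^{-1}\mathcal H(\mathbb P_N,\mathbb P^N_{nlS})$, I obtain
\begin{equation}
\mathcal H(\mathbb P^k_N,\mathbb P^k_{nlS})|_{\mathcal F_t}\;\le\;\frac{N}{q}\,\bar{\mathcal H}(\mathbb P_N,\mathbb P^N_{nlS})|_{\mathcal F_t}.
\end{equation}
Since $N/q\to k$ as $N\uparrow\infty$ (with $k$ fixed), and the right-hand side tends to zero by hypothesis \eqref{normrelativeentropy}, the claim follows.

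The main technical subtlety I anticipate is confirming that $\mathbb P^N_{nlS}$ is genuinely a product measure on the $N$-particle path space, which is exactly what makes Lemma \ref{lemma3} applicable block by block. On the dynamical side this is automatic because the drift $\hat u_{nlS}$ in \eqref{SDEb} acts coordinatewise and the components of $\hat W'$ are independent; on the initial-datum side it amounts to reading $\hat Y_0$ in \eqref{SDEb} as distributed according to the product $\rho_{nlS}^{\otimes N}$, which is the convention already implicitly adopted in the proof of Theorem \ref{theorem3} when Lemma \ref{lemma3} is invoked. Once this is granted, no further analytic input is needed: the entire argument is a bookkeeping iteration of the product-reference subadditivity together with symmetry, and the factor $N/q\to k$ is harmless because $k$ is fixed.
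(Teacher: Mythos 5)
Your proof is correct and follows essentially the same route as the paper: write $N$ as $k$ blocks of size $k$ plus a residual, apply the product-reference subadditivity of Lemma \ref{lemma3} block by block together with the exchangeability of $\mathbb P_N$, and conclude from $N/\lfloor N/k\rfloor\to k$ and the vanishing of the normalised entropy. The only difference is that the paper runs an induction on $k$, retaining the residual-block term and controlling the lower-order marginals $\mathcal H(\mathbb P^r_N,\mathbb P^r_{nlS})$, $r<k$, by the inductive hypothesis, whereas you simply discard that term using the non-negativity of relative entropy; this removes the induction altogether and yields the cleaner (and slightly tighter) bound $\mathcal H(\mathbb P^k_N,\mathbb P^k_{nlS})|_{\mathcal F_t}\le \frac{N}{\lfloor N/k\rfloor}\,\bar{\mathcal H}(\mathbb P_N,\mathbb P^N_{nlS})|_{\mathcal F_t}$, so your version is a legitimate simplification of the paper's argument rather than a gap. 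Your closing caveat, that $\mathbb P^N_{nlS}$ must be read as a genuine product measure on the $N$-particle path space (coordinatewise drift, independent Brownian components, product initial law), is exactly the convention the paper itself needs when it invokes Lemma \ref{lemma3} in the proof of Theorem \ref{theorem3}, so you are not adding any assumption beyond what the paper already uses.
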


\begin{proof}
We prove the statement by induction on $k$. By simplicity of notations we do not write the restriction to ${\mathcal F_t}$.  The assertion is true for $k=1$ as we have shown  in \eqref{onerelativeentropy}.

Let us write $N=kN_k+r_k$ and suppose that the statement is true for any $r_k <
 k$. Applying  Lemma \ref{lemma3} we have 
\begin{equation}
{\mathcal H}({\mathbb P_N}|{\mathbb P^N_{nlS}})\geq N_k {\mathcal H}({\mathbb
P}^k_N|{\mathbb P}^k_{nlS}) + {\mathcal H}({\mathbb P}^{r_k}_N|{\mathbb P}^{r_k}_{nlS}),
\end{equation}
which implies:
\begin{multline}
{\mathcal H}({\mathbb
P}^k_N|{\mathbb P}^k_{nlS}) \leq \frac{(k+1)}{N_k} \left\{{\mathcal H}({\mathbb P_N}|{\mathbb P^N_{nlS}})+
\sum^{k-1}_{r=1} {\mathcal H}({\mathbb P}^{r}_N|{\mathbb P}^{r}_{nlS})\right\}\\
= (k+1)\frac{N}{N_k}\bar {\mathcal H}({\mathbb P_N}|{\mathbb P^N_{nlS}})+
\frac{(k+1)}{N_k}\sum^{k-1}_{r=1} {\mathcal H}({\mathbb P}^{r}_N|{\mathbb P}^{r}_{nlS})
\end{multline}
Since $N\uparrow +\infty$ if and only if  $N_k\uparrow +\infty$, by induction hypothesis we obtain the result.

\end{proof}

\begin{theorem}[A strong form of  Kac's chaos on path space]\label{theorem5}
Under the same hypothesis $h_1), h_2)$  as in Theorem \ref{theorem1},
for all fixed $t >0$,$ \forall k\in {\mathbb N}$ we have
\begin{equation}
\lim_{N\uparrow +\infty}d_{TV}({\mathbb P}^k_N,{\mathbb{P}}^k_{nlS})|_{\mathcal F_{t}}=0
\end{equation}
\end{theorem}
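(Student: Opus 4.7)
My plan is a direct two-step combination of results already established. The first step is to invoke the Csiszar-Kullback inequality at the level of the $k$-particle marginals on the (restricted) path space, exactly as was done for $k=1$ in the proof of Theorem \ref{theorem3}:
\begin{equation*}
d_{TV}({\mathbb P}^k_N,{\mathbb P}^k_{nlS})|_{\mathcal F_t}
\le \sqrt{2\,{\mathcal H}({\mathbb P}^k_N,{\mathbb P}^k_{nlS})|_{\mathcal F_t}}.
\end{equation*}
This reduces the claim to the vanishing of the $k$-particle relative entropy as $N\uparrow\infty$.

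The second step is to apply Lemma \ref{lemma4}, whose hypothesis \eqref{normrelativeentropy} is precisely \eqref{equation_limit}, itself a consequence of Lemmas \ref{lemma_entropy} and \ref{lemma2}, Remark \ref{remark_entropy} and the energy-components Theorem \ref{theorem2}. Lemma \ref{lemma4} upgrades $\bar{\mathcal H}({\mathbb P}_N,{\mathbb P}^N_{nlS})|_{\mathcal F_t}\to 0$ to ${\mathcal H}({\mathbb P}^k_N,{\mathbb P}^k_{nlS})|_{\mathcal F_t}\to 0$ for every fixed $k\in\mathbb{N}$, and combining this with the Csiszar-Kullback bound above yields the claimed vanishing of the $k$-particle total variation distance.

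No new analytic input is needed at this stage: the $L^2$-convergence of the drift difference $b^N_1(\hat Y_s)-u^{nlS}(Y_1(s))$ obtained via the energy-components Theorem \ref{theorem2}, and the sub-additive chain-rule bound for the relative entropy against a product reference measure provided by Lemma \ref{lemma3}, are what make the argument work. The mildest point of care is the induction already carried out inside Lemma \ref{lemma4}, where writing $N=kN_k+r_k$ with $0\le r_k<k$ and iterating the chain rule lets one bound ${\mathcal H}({\mathbb P}^k_N|{\mathbb P}^k_{nlS})$ by $\tfrac{k+1}{N_k}$ times a sum of a normalized $N$-particle entropy and lower-order marginal entropies, so that the prefactor $N/N_k$ remains bounded (tending to $k$) while the lower-order terms vanish by the inductive hypothesis. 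I do not anticipate any genuine obstacle beyond this bookkeeping; the strong form of Kac's chaos is essentially a clean probabilistic corollary of the analytic drift-convergence established in Section \ref{section_quantum} together with the entropy chain rule.
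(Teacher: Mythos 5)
Your proposal is correct and follows essentially the same route as the paper's own proof: the drift $L^2$-convergence from Theorem \ref{theorem2} (via Lemma \ref{lemma_entropy}, Lemma \ref{lemma2} and Remark \ref{remark_entropy}) gives vanishing of the normalized relative entropy, Lemma \ref{lemma4} upgrades this to the $k$-particle relative entropy, and the Csiszar--Kullback inequality converts that to total variation convergence. No substantive difference from the argument in the paper.
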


\begin{proof}
 Since by Lemma \ref{lemma2}
 \begin{equation}
 \bar{\mathcal H} (\mathbb P_N, \mathbb P^N_{nlS})|_{\mathcal F_t}
 =\frac 1 2 {\mathbb E}_{\mathbb P_N}\int_0^{t}  \| b ^N_1(\hat Y_s) -  u^{nlS}(Y_1(s))\|^2 d s
 \end{equation}
by Remark \ref{remark_entropy} we obtain that
\begin{equation}
\lim_{N\uparrow +\infty}\bar {\mathcal H} (\mathbb P^k_N, \mathbb P^k_{nlS})=0
\end{equation}
 By Lemma \ref{lemma4} it follows that for every fixed $k$,
\begin{equation}
\lim_{N\uparrow +\infty}{\mathcal H} (\mathbb P^k_N, \mathbb P^k_{nlS})=0
\end{equation}
Since by the Csiszar-Kullback inequality we have:
\begin{equation}\label{CKinequality}
d_{TV}({\mathbb P}^k_N,{\mathbb{P}}^k_{nlS})|_{\mathcal F_{t}}\leq \sqrt{2 \mathcal H(\mathbb P^k_N,\mathbb P^k_{nlS}) |_{\mathcal F_{t}}},
\end{equation}
we obtain the result.
\end{proof}

\section{Fisher information chaos in Bose-Einstein Condensation}\label{section_fisher}

In this section
we consider the symmetric
probability law $G^N$ of our $N$ interacting diffusions on the
product space $\mathbb{R}^{3N}$, which are absolutely continuous with density $\rho_N:=|\Psi^0_N|^2$.
 
First we define the Fisher information associated to a probability measure $G$ on a space $\mathbb{R}^{3n}$.

\begin{definition}
For $G\in W^{1,1}(S^n)$ we put
$$I_n(G):=\int_{\mathbb{R}^{3n}}\frac{|\nabla G|^2}{G}$$
otherwise equal to $+\infty$.
We consider the normalized Fisher information $I:=\frac{1}{n}I_n$
\end{definition}
The Fisher information has the following important properties:

\begin{enumerate}

\item $I$ is proper, convex, l.s.c. (in the sense of the weak convergence of measures) on ${\mathcal P}(\mathbb{R}^{3n}) $ (the space of probability measures on $S^n$).

\item \begin{enumerate} 

\item For $1\le l \le n, \quad  I_l(G_l) \le I(G)$ where $G\in {\mathcal  P}(\mathbb{R}^{3n}) $

\item The (non normalized) Fisher information is super-additive, i.e.
$$ I_n(G)\ge I_l(G_l)+ I_{n-l}(G_{n-l})$$
with (in the case $I_l(G_l)+ I_{n-l}(G_{n-l})<+\infty$) equality if and only if $G=G_lG_{n-l}$

\item If $I(G_1) < +\infty$, the equality $I(G_1)=I(G)$ holds if and only if $G=(G_1)^{\otimes j}$

\end{enumerate}\end{enumerate}

\begin{proof}
For 1) see, e.g., \cite{HaMi}, Lemma 3.5. For 2) see \cite{Cadd} or \cite{HaMi}, Lemma 3.7.
\end{proof}

\begin{proposition}[Fisher's chaos in mean-field BEC]
There is Fisher's chaos in the generic mean-field Bose-Einstein Condensation.
\end{proposition}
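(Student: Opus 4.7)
The strategy is short once Theorem~\ref{theorem2} is in hand. I would first rewrite the normalized Fisher information of $\rho_N=|\Psi^0_N|^2$ in a form already controlled by the kinetic term analyzed in the previous section. Since the ground state $\Psi^0_N$ can be taken real, strictly positive and $H^1$, the chain rule $\nabla\rho_N=2\Psi^0_N\nabla\Psi^0_N$ gives the pointwise identity $|\nabla\rho_N|^2/\rho_N = 4\,|\nabla\Psi^0_N|^2$. Integrating and exploiting the bosonic symmetry, which makes every $\int|\nabla_i\Psi^0_N|^2\,d\mathbf{r}_1\cdots d\mathbf{r}_N$ equal, yields
\begin{equation*}
I(\rho_N)=\frac{1}{N}I_N(\rho_N)=\frac{4}{N}\int_{\R^{3N}}|\nabla\Psi^0_N|^2\,d\mathbf{r}_1\cdots d\mathbf{r}_N=4\int_{\R^{3N}}|\nabla_1\Psi^0_N|^2\,d\mathbf{r}_1\cdots d\mathbf{r}_N.
\end{equation*}

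Next, I would invoke the kinetic component of the energy-splitting result, namely equation~\eqref{E1} of Theorem~\ref{theorem2}, which under hypotheses h1), h2) and Hypothesis~A asserts
\begin{equation*}
\lim_{N\uparrow\infty}\int_{\R^{3N}}|\nabla_1\Psi^0_N|^2\,d\mathbf{r}_1\cdots d\mathbf{r}_N=\int_{\R^3}|\nabla\phi_{nlS}|^2\,d\mathbf{r}.
\end{equation*}
Since $\phi_{nlS}$ is likewise real and strictly positive, the same pointwise identity gives $I_1(\rho_{nlS})=4\int|\nabla\phi_{nlS}|^2\,d\mathbf{r}$, and combining the two displays produces
\begin{equation*}
\lim_{N\uparrow\infty}I(\rho_N)=I_1(\rho_{nlS}),
\end{equation*}
which is precisely the statement of Fisher's chaos for the symmetric $N$-particle law $\rho_N$ with limit one-particle density $\rho_{nlS}$.

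As a consistency check, property 2(a) of the normalized Fisher information gives $I_1(\rho_N^{(1)})\le I(\rho_N)$, while the weak-$L^1$ convergence $\rho_N^{(1)}\to\rho_{nlS}$ furnished by Theorem~\ref{theorem1} together with the lower semi-continuity of $I_1$ (property~1) forces $I_1(\rho_{nlS})\le\liminf_N I_1(\rho_N^{(1)})$. Sandwiching these bounds with the identity above turns both inequalities into asymptotic equalities, so one obtains Fisher convergence of the one-particle marginals as a by-product. I do not foresee any serious obstacle here: the only non-trivial ingredient, the convergence of the kinetic energy term, has already been settled by the Hellmann--Feynman/concavity argument used to prove Theorem~\ref{theorem2}, and what remains is the purely algebraic identification of $I_N(\rho_N)$ with four times the total kinetic functional.
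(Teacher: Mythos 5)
Your proof is correct and follows essentially the same route as the paper: identify the normalized Fisher information of $\rho_N=|\Psi^0_N|^2$ with (four times) the per-particle kinetic energy via $\nabla\rho_N=2\Psi^0_N\nabla\Psi^0_N$, pass to the limit using \eqref{E1} of Theorem \ref{theorem2}, and combine with the weak convergence of the one-particle marginal to conclude Fisher's chaos. Your handling of the constant factor $4$ is in fact more careful than the paper's displayed computation, and the concluding lower-semicontinuity sandwich is a harmless addition.
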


\begin{proof}
The Fisher information associated to the symmetric law ${\mathbb P_N}$ is:
$$
 I({G_N})=\frac{1}{N}\int_{\mathbb{R}^{3N}}\left|\frac{\nabla \rho_N}{\rho_N}\right|^2\rho_N d {\bf r}_1 \dots d {\bf r}_N
                             =\frac{4}{N}\int_{\mathbb{R}^{3N}}\left|\frac{\nabla \rho_N}{2\rho_N}\right|^2\rho_N d {\bf r}_1 \dots d {\bf r}_N$$
                             $$=\frac{1}{N}\int_{\mathbb{R}^{3N}}|{\nabla \Psi_N}|^2d {\bf r}_1 \dots d {\bf r}_N$$
By Theorem \ref{theorem2}, in particular by \eqref{E1}, we get 
$$\lim_{N\uparrow \infty} I({G_N})=I({G_{nlS}})$$
Moreover, by  Theorem \ref{theorem2}, we also have that 
$\rho^{(1)}_N$ converges to $\rho_{nlS}$ weakly in $\mathcal P(S)$ as $N\rightarrow +\infty$ and so we have that Fisher's chaos holds. 
\end{proof}
\begin{proposition}[Entropy chaos in mean-field BEC]
Entropy chaos holds in the generic mean-field Bose-Einstein Condensation.
\end{proposition}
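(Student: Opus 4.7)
The plan is to establish entropy chaos in the Hauray--Mischler sense, namely that the normalized relative entropy $\frac{1}{N}H(G_N\,|\,G_{nlS}^{\otimes N})$ vanishes in the limit $N\uparrow \infty$; together with the weak convergence $\rho^{(1)}_N \to \rho_{nlS}$ already supplied by Theorem \ref{theorem1}, this is the standard form of entropy chaos and is equivalent, after unfolding, to the convergence of the normalized Boltzmann entropy $\frac{1}{N}\int \rho_N\log\rho_N\,d{\bf r}_1\cdots d{\bf r}_N$ to $\int \rho_{nlS}\log \rho_{nlS}\,d{\bf r}$.

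The first step is to observe that the normalized relative Fisher information of $G_N$ with respect to the product reference $G_{nlS}^{\otimes N}$ has already been computed, up to a factor, in Lemma \ref{lemma_entropy}. Since $\nabla_i \log \rho_N = 2 b^N_i$ and $\nabla_i \log \rho_{nlS}({\bf r}_i) = 2 u_{nlS}({\bf r}_i)$, the symmetry of $\rho_N$ yields
\begin{equation*}
\frac{1}{N}\,I(G_N \,|\, G_{nlS}^{\otimes N}) \;=\; 4\,\mathbb{E}_{\mathbb{P}_N}\bigl[\|b_1^N(\hat Y_s) - u^{nlS}(Y_1(s))\|^2\bigr],
\end{equation*}
which by Remark \ref{remark_entropy} tends to zero as $N\uparrow \infty$.

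The second (and central) step is to pass from vanishing relative Fisher information to vanishing relative entropy. Under hypothesis h1) the confining potential forces the ground state $\phi_{nlS}$ to have super-Gaussian decay at infinity, so that $\rho_{nlS}$ satisfies a logarithmic Sobolev inequality with some constant $\lambda>0$ (via Bakry--\'Emery if $V$ is uniformly convex at infinity, or via a Holley--Stroock perturbation of a Gaussian log-Sobolev inequality in the general case). Tensorization preserves the log-Sobolev constant, so $G_{nlS}^{\otimes N}$ obeys the same inequality uniformly in $N$, whence
\begin{equation*}
\frac{1}{N}\,H(G_N \,|\, G_{nlS}^{\otimes N}) \;\le\; \frac{1}{2\lambda}\cdot \frac{1}{N}\,I(G_N \,|\, G_{nlS}^{\otimes N}) \;\longrightarrow\; 0.
\end{equation*}
Unfolding $\frac{1}{N}H(G_N\,|\,G_{nlS}^{\otimes N}) = \frac{1}{N}H(G_N) - \int \rho^{(1)}_N \log \rho_{nlS}\,d{\bf r}$ and using the weak convergence $\rho^{(1)}_N \to \rho_{nlS}$ together with the moment bounds provided by h1) to pass the integral to the limit, the Boltzmann form $\frac{1}{N} H(G_N) \to H(G_{nlS})$ follows.

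The main obstacle I anticipate is the rigorous verification of a logarithmic Sobolev inequality for $\rho_{nlS}$ under the paper's minimal hypotheses: strict convexity of $V$ at infinity would give this by Bakry--\'Emery, but h1) alone is a rather mild confinement hypothesis. Should the log-Sobolev route prove technically demanding in full generality, an alternative is to invoke the general Hauray--Mischler theorem asserting that Fisher information chaos (already proved in the preceding proposition) implies entropy chaos, which bypasses the log-Sobolev inequality in favor of an HWI inequality combined with a tightness/moment argument, and uses only the Fisher chaos statement of the preceding proposition as input.
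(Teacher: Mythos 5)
Your fallback is exactly the paper's proof: the paper's entire argument is a one-line citation of Hauray--Mischler \cite{HaMi}, to the effect that Fisher information chaos (established in the preceding proposition) implies entropy chaos (and Kac's chaos); so if you simply invoke that implication, you coincide with the paper. Your primary route is genuinely different and more quantitative, and its first step is sound: by symmetry and $\nabla_i\log\rho_N=2b^N_i$, $\nabla\log\rho_{nlS}=2u_{nlS}$, one indeed gets $\frac1N I(G_N\,|\,G_{nlS}^{\otimes N})=4\,\mathbb{E}_{\mathbb{P}_N}[\|b^N_1-u_{nlS}\|^2]\to 0$, which is precisely the content of Lemma \ref{lemma_entropy}, Theorem \ref{theorem2} and Remark \ref{remark_entropy}. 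What this route would buy, if completed, is an explicit rate: relative entropy controlled by the drift discrepancy, rather than the soft compactness/HWI argument of \cite{HaMi}.

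The gap is the step you yourself flag, and it is real under the paper's hypotheses: h1) only asks that $V$ be locally bounded, continuous, strictly positive and tending to infinity, so $V$ may grow arbitrarily slowly; the ground state $\phi_{nlS}$ then decays only at a correspondingly slow (sub-Gaussian) rate, and no logarithmic Sobolev inequality for $\rho_{nlS}$ follows --- Bakry--\'Emery needs convexity-type conditions that h1) does not provide, and Holley--Stroock needs a \emph{bounded} perturbation of a log-concave reference, which is likewise unavailable. A second, smaller issue is the "unfolding" step: to pass from $\frac1N H(G_N\,|\,G_{nlS}^{\otimes N})\to 0$ to $\frac1N H(G_N)\to H(G_{nlS})$ you need $\int\rho^{(1)}_N\log\rho_{nlS}\to\int\rho_{nlS}\log\rho_{nlS}$, and weak convergence alone does not give this since $\log\rho_{nlS}$ is unbounded; one would need a two-sided bound of the type $|\log\rho_{nlS}|\lesssim 1+V$ (a ground-state decay estimate not stated in the paper) together with the uniform control of $\int V\rho^{(1)}_N$ coming from Theorem \ref{theorem2}. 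Since both difficulties are bypassed by the \cite{HaMi} implication you cite as an alternative, your proposal is acceptable, but only through that fallback; as written, the log-Sobolev route does not go through under h1) alone.
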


\begin{proof}
The proof of the fact that the Fisher's information chaos implies entropy chaos and Kac's chaos can be found in \cite{HaMi}.
\end{proof}

\section*{Acknowledgements}

The first and second authors would like to thank the Department of Mathematics, Universit\`a degli Studi di Milano for the warm hospitality. The second author is supported by the German Research Foundation (DFG) via SFB 1060.


\end{document}